\documentclass[11pt]{amsart}

\usepackage{amsmath, amssymb, amsthm}
\usepackage[hidelinks]{hyperref}
\usepackage[parfill]{parskip}        
\usepackage[margin=1.05in]{geometry} 
\usepackage{tikz}
\usetikzlibrary{decorations.markings}
\usepackage{tikz-cd}
\usetikzlibrary{matrix}
\usepackage{comment}
\usepackage{mathtools}

\title{Self-similarity and limit spaces of substitution tiling semigroups}\keywords{} \subjclass{}

\author[James Walton]{James J.\ Walton}
\address{School of Mathematical Sciences, The University of Nottingham, University Park, Nottingham, NG7 2RD, United Kingdom}
\email{James.Walton@nottingham.ac.uk}
 
\author[Michael F.\ Whittaker]{Michael F.\ Whittaker}
\address{School of Mathematics and Statistics, University of Glasgow, University Place, Glasgow Q12 8QQ, United Kingdom}
\email{Mike.Whittaker@glasgow.ac.uk}

\def\R{\mathbb{R}}
\def\N{\mathbb{N}}
\def\Z{\mathbb{Z}}

\def\OO{\mathcal{O}}

\def\NN{\mathcal{N}}

\def\PP{\mathcal{P}}

\def\ep{\varepsilon}

\newcommand{\Om}{\Omega}
\newcommand{\Omp}{\Omega_\mathrm{punc}}
\newcommand{\Rp}{R_\mathrm{punc}}

\newcommand{\mc}{\mathcal}

\newcommand{\dom}{\mathrm{dom}}
\newcommand{\ran}{\mathrm{ran}}
\newcommand{\sub}{\varphi}

\newcommand{\supp}{\mathrm{supp}}
\newcommand{\aeq}{\sim_\mathrm{ae}}
\newcommand{\limsp}{\mathcal{J}}

\newtheorem{theorem}{Theorem}[section]
\newtheorem{proposition}[theorem]{Proposition}
\newtheorem{lemma}[theorem]{Lemma}

\theoremstyle{definition}
\newtheorem{definition}[theorem]{Definition}
\newtheorem{example}[theorem]{Example}

\newtheorem{remark}[theorem]{Remark}

\numberwithin{equation}{section} 
\numberwithin{figure}{section}   

\tikzstyle{vertex}=[circle]
\tikzstyle{goto}=[->,shorten >=1pt,>=stealth,semithick]

\newcommand{\HHex}[4]{
\begin{scope}[xshift=#1cm,yshift=#2cm,rotate=#3,scale=0.5^(#4)]
\draw (0:1) -- (60:1) -- (120:1) -- (180:1) -- (0:1); 
\end{scope}}
\newcommand{\HHexI}[4]{
\begin{scope}[xshift=#1cm,yshift=#2cm,rotate=#3,scale=0.5^(#4)]
\HHex{0}{0}{0}{1} 
\HHex{0}{0.866}{180}{1} 
\HHex{0.75}{0.433}{120}{1} 
\HHex{-0.75}{0.433}{240}{1} 
\end{scope}}
\newcommand{\HHexII}[4]{
\begin{scope}[xshift=#1cm,yshift=#2cm,rotate=#3,scale=0.5^(#4)]
\HHexI{0}{0}{0}{1} 
\HHexI{0}{0.866}{180}{1} 
\HHexI{0.75}{0.433}{120}{1} 
\HHexI{-0.75}{0.433}{240}{1} 
\end{scope}}

\makeatletter
\@namedef{subjclassname@2020}{%
  \textup{2020} Mathematics Subject Classification}
\makeatother

\thanks{This research was partially supported by EPSRC grant EP/R013691/1.}
\keywords{aperiodic tilings; self-similar; semigroups; tiling dynamics}
\subjclass[2020]{Primary: 37B52; Secondary: 20M18; 52C22}

\begin{document}
\maketitle

\centerline{\em Dedicated to our late colleague and friend Uwe Grimm}

\begin{abstract} We show that Kellendonk's tiling semigroup of an FLC substitution tiling is self-similar, in the sense of Bartholdi, Grigorchuk and Nekrashevych. We extend the notion of the limit space of a self-similar group to the setting of self-similar semigroups, and show that it is homeomorphic to the Anderson--Putnam complex for such substitution tilings, with natural self-map induced by the substitution. Thus, the inverse limit of the limit space, given by the limit solenoid of the self-similar semigroup, is homeomorphic to the translational hull of the tiling.
\end{abstract}

\section{Introduction}

We study aperiodic tilings of Euclidean space arising from a substitution rule. Aperiodic tilings provide important examples of topological dynamical systems, over spaces of tilings called \emph{tiling spaces} \cite{Sadun}. Since the seminal work of Anderson and Putnam \cite{AP}, much is known about the topology of tiling spaces of substitution tilings, including calculations of their topological invariants \cite{BDHS,Sadun}. A beautiful result of Sadun and Williams \cite{SW} proves that the tiling space of an FLC tiling is a fibre bundle over the torus, with totally disconnected fibres (which are homeomorphic to the Cantor set, in the case of a repetitive, nonperiodic tiling). Kellendonk \cite{Kel1,Kel2} established a complementary algebraic approach, by constructing an inverse semigroup of partial translations for such a tiling. Tiling semigroups were extensively studied by Kellendonk and Lawson in \cite{KL} and their algebraic properties were determined by Zhu in \cite{Zhu}. In this paper, we show that tiling semigroups arising from a substitution are self-similar, and that the limit solenoid naturally associated to the tiling semigroup is homeomorphic to the tiling space.

Self-similarity of groups has been a hugely fruitful mechanism with which to construct groups enjoying certain properties, particularly growth properties. As such they have been key in solving several important problems in Group Theory and Geometric Group Theory, most notably Grigorchuk's famous group with intermediate growth \cite{nek_book, Gr, Gr2}. Self-similar inverse semigroups acting on topological Markov shifts were introduced by Bartholdi, Grigorchuk and Nekrashevych in \cite{BGN} and Nekrashevych went on to show they give rise to Smale spaces in \cite{Nek_SSIS}. It has already been noted that self-similar groups and semigroups can be associated to some substitution tilings. However, the general theory for substitution tilings is not worked out in either paper, which focus entirely on a quotient by rotations and reflections of the Penrose tilings, see \cite[p.13--15]{BGN} and \cite[p.859--861]{Nek_SSIS}. Moreover, we outline a different, more global approach, which makes it implicit that the full object constructed is Kellendonk's tiling semigroup, rather than starting with generators and (self-similar) relations. Our constructions are easily modified to construct the analogous object where translations are replaced with general rigid motions, but we consider it important to present the theory in the translational case, which has remained a major focus in Aperiodic Order owing to connections between translational dynamics of aperiodic patterns and their spectral properties, which finds application to the study of quasicrystals \cite{BG}.

For a substitution tiling satisfying standard conditions we show that Kellendonk's tiling semigroup acts self-similarly on a topological Markov chain that is naturally homeomorphic to the canonical transversal of the tiling space. We recall the substitution graph, whose path-space is a topological Markov chain. It is well-known that this is conjugate to Kellendonk's discrete hull of a tiling with dynamics arising from (the inverse of) substitution. The tiling semigroup acts on this space by translation. Indeed, an element of the tiling semigroup specifies a patch of tiles with two distinguished tiles that specify the domain and range of the translation between them. The self-similarity arises from the fact that translation across patches of tiles can be lifted to translations between supertiles, with these structures being analogous at all levels of the hierarchy in the topological Markov chain.

In \cite{AP}, Anderson and Putnam define a branched manifold from a substitution tiling, now called the Anderson--Putnam (AP) complex. Starting with a collection of prototiles, the AP complex is the quotient space defined by the relation that two prototiles are glued along their codimension-1 faces if those faces ever meet in a tiling. For a substitution tiling, one can define addresses of points of prototiles by a left-infinite topological Markov shift. We define the limit space of a general self-similar semigroup action, in an analogous way to the self-similar group case \cite{BGN}, as a quotient space of such left-infinite topological Markov shifts by asymptotic equivalence, which itself we modify for the semigroup case (Definition \ref{def:ae}). We prove that the limit space of a tiling semigroup is homeomorphic to the AP complex. The limit space naturally inherits a self-map from the shift map of the Markov shift, which corresponds to the usual self-map by substitution on the AP complex, which thus has inverse limit homeomorphic to the tiling space.

The paper is organised as follows. In Section \ref{sec: Nonperiodic tilings and their semigroups} we recall the basic facts required about aperiodic tilings, their tiling semigroups, tiling spaces and tiling substitutions. In Section \ref{sec: Self-similarity of substitution tiling semigroups} we prove that tiling semigroups of substitution tilings (with standard restrictions) are self-similar. We give a general notion of a self-similar semigroup being contracting (Definition \ref{def: contracting}) and show this property holds for substitution tiling semigroups. In Section \ref{sec: limit space} we introduce the asymptotic equivalence relation for self-similar semigroups (Definition \ref{def:ae}). The associated quotient space, called the limit space, is shown to be homeomorphic to the Anderson--Putnam complex (Theorem \ref{thm: limit spaces of sub tilings}). Finally, in Section \ref{sec: examples}, we see how the definitions given apply to various examples.


\section{Nonperiodic tilings and their semigroups} \label{sec: Nonperiodic tilings and their semigroups}

Tilings will be built from a finite set $\PP$ of \textbf{prototiles}, labelled compact subsets of $\R^d$ that are equal to the closure of their interior. The support of $p$ is denoted $\supp(p) \subset \R^d$. A translate of a prototile is called a {\bf tile} and a finite, connected set of tiles which overlap only on their boundaries is called a {\bf patch}. By {\bf connected} here, we mean that any two tiles can be connected through a path of {\bf meeting tiles}. By tiles {\bf meeting} we allow two options: that the tiles are {\bf adjacent}, meaning that they intersect non-trivially (on their boundaries) or alternatively, if the tiles and patches have cell decompositions (for example, the tiles are polyhedra), then the tiles meet along a shared codimension-1 face. When cells have a cellular decomposition, the constructions to follow will hold for whichever of these two conventions the reader prefers. The set of all finite patches is denoted $\PP^*$. A \textbf{tiling} $T$ is a covering of $\R^d$ by tiles which intersect only on their boundaries. Given a tiling $T$ and bounded subset $S \subseteq \R^d$ we define
\[
T \sqcap S \coloneqq \{t \in T \mid \supp(t) \cap S \neq \varnothing \}.
\]
If $S$ is a closed ball of radius $r$ then $T \sqcap S$ is called an {\bf $r$-patch}. If $T$ is a tiling and $x \in \R^d$, the translate of $T$ by $x$ is $T+x \coloneqq \{t+x \mid t \in T\}$ and the \textbf{orbit} of $T$ is $\OO(T) \coloneqq \{T+x \mid x \in \R^d\}$. We say that $T$ is \textbf{nonperiodic} if $T+x=T$ implies that $x=0$.

\begin{definition}\label{tiling metric}
For tilings $T$, $T'$ we define their distance in the \textbf{tiling metric} as
\[
d(T,T') \coloneqq \inf \{\ep, 1 \mid (T-x) \sqcap B_{1/\ep} = (T'-x') \sqcap B_{1/\ep},\ x, x' \in \R^d,\ |x|, |x'| < \varepsilon\},
\]
where $B_r$ denotes the closed ball of radius $r$ centred at $0 \in \R$.
\end{definition}

Two tilings $T$, $T'$ are close if $T$ and $T'$ have the same patch of tiles on a large ball centred about the origin, up to a small translation. The \textbf{continuous hull} (or {\bf tiling space}) of a tiling $T$ is the space of tilings whose finite patches all belong to $T$, up to translation, with topology induced by the tiling metric (that is, it is the space of tilings which are {\bf locally indistinguishable} from $T$). Equivalently, $\Om_T$ may be regarded as the completion of $\OO(T)$ under the tiling metric. We call $T$ {\bf repetitive} if, for every finite patch $P$, there exists some $r > 0$ so that a translated copy of $P$ can be found in every $r$-patch of $T$. In this case, every element of $\Om_T$ has the same set of finite patches and thus $\Om_T = \Om_{T'}$ for all $T' \in \Om_T$. In particular, if $T$ is nonperiodic and repetitive, then $T$ is {\bf strongly aperiodic}, that is, every element of $\Om_T$ is nonperiodic. A tiling $T$ is said to have \textbf{finite local complexity (FLC)} if there are only a finite number of two-tile patches in $T$, up to translation (equivalently, there are finitely many $r$-patches for each $r > 0$). Of course, every repetitive tiling has FLC. With the topology above, FLC is equivalent to compactness of $\Omega_T$ \cite[Lemma 2]{RW}.

A \textbf{substitution} on a set of prototiles $\PP$ is a map $\sub \colon \PP \to \PP^*$ for which there is a scaling factor $\lambda>1$ with $\supp(\sub(p)) = \lambda \cdot \supp(p)$ for each $p \in \PP$. Since the support of a substituted tile is exactly equal (rather than just covering) its inflated tile, $\sub$ is more specifically a {\bf stone inflation}. This property is necessary in what follows, but the inflation being a similarity $x \mapsto \lambda x$ is not and can instead be taken as an expansive linear map; we assume an expansion constant merely for expository convenience. A substitution $\sub$ on a tile $t=p+x$ is defined to be $\sub(t) \coloneqq \sub(p) + \lambda x$. Then a substitution may be applied to a patch, which by a slight abuse of notation we also denote $\sub \colon \PP^* \to \PP^*$; similarly, substitution may be applied to tilings. An \text{$n$-supertile} is a translate of the patch $\sub^n(p)$ for some $p \in \PP$. We will always assume that $\sub$ generates FLC tilings, which is to say that it generates only finitely many two-tile patches (up to translation equivalence) under iteration. We call $\sub$ {\bf primitive} if there exists some $k \in \N$ so that, for each $a$, $b \in \PP$, we have that $\sub^k(a)$ contains a translated copy of $b$.

A tiling $T$ is {\bf admitted} by the substitution if every finite sub-patch of $T$ is contained in some $n$-supertile. The set of such tilings is denoted $\Omega_\sub$. It follows easily from FLC that $\Omega_\sub$ is non-empty and, if $\sub$ is primitive, then every admitted tiling is repetitive, so that $\Omega_\sub = \Omega_T$ for any $T \in \Omega_\sub$.

The induced map $\sub \colon \Omega_\sub \to \Omega_\sub$ is always surjective, so that for each tiling $T$ there is a corresponding `supertiling', which decomposes under $\sub$ to $T$. If $\sub$ is additionally injective, then we say that $\sub$ is {\bf recognisable}. This means that, for any $T \in \Omega_\sub$, there is a unique way to group tiles into supertiles, whose associated tiling in $\Omega_\sub$ decomposes to $T$ under substitution. By continuity and compactness, this may always be done by a locally defined rule for a recognisable substitution. Recognisability is equivalent to non-periodicity of the tilings of $\Omega_\sub$ \cite{Sol2}. We will assume throughout that $\sub$ is recognisable.

We always assume that $\sub$ {\bf forces the border} \cite[p.24]{Kel1}. This means that there is some $k \in \N$ so that any $k$-supertile $\sub^k(p)$, for $p \in \PP$, extends uniquely to a valid patch containing $\sub^k(p)$ and any tiles intersecting the boundary of $\sub^k(p)$ (that is, $\sub^k(p)$ uniquely extends to its $1$-corona). Border forcing may always be assumed by passing to a dynamically equivalent substitution by collaring tiles (see \cite{AP,Sadun}).

The \textbf{Anderson--Putnam complex} of a tiling is the compact Hausdorff topological space formed through taking the transitive closure of gluing together prototiles in all ways their translations can be adjacent in a tiling, see \cite[Section 4]{AP}.

In this paper we make use of the discrete hull of a tiling, a particular subset of the continuous hull. Let $T$ be a tiling with prototile set $\PP$. Following Kellendonk \cite[Section 2.1]{Kel1}, for each $p \in \PP$, choose a point in the interior of $\supp(p)$ called a \textbf{puncture} and denote it by $x(p)$. This naturally punctures tiles $t=p+y$ by $x(t) \coloneqq x(p)+y$ and defines sets of punctures for patches and tilings. The \textbf{discrete hull} of a tiling $T$ is given by
\[
\Omp \coloneqq \{T' \in \Omega_T \mid \text{there exists}\ t \in T'\ \text{with}\ x(t) = 0 \} \subset \Omega_T,
\]
i.e., the subset of tilings with a puncture over the origin of $\R^d$. If $T$ is repetitive, non-periodic and has FLC then $\Omp$ is a Cantor set. In particular, $\Omp$ is a compact metric space that has a basis of clopen sets. Indeed, for a patch $P$ and a tile $t$ in $P$, the set
\begin{equation}\label{disc_top}
U(P,t) \coloneqq \{ T' \in \Omp \mid P-x(t) \subset T' \}
\end{equation}
is clopen in $\Omp$, and the set of all such sets forms a basis for the metric topology on $\Omp$.

\subsection{The substitution graph and discrete hull}
For $p \in \PP$ and $t \in \sub(p)$ we call $(t,p)$ a {\bf supertile extension}, and denote the set of such supertile extensions by $\mc{S}$. If at most one copy of each prototile appears in each substituted prototile, then the elements of $\mc{S}$ can be identified with all pairs $(a,b) \in \mc{P}^2$ for which $a \in \sub(b)$, but we do not need to assume this in general (see Example \ref{ex: supertile duplicates} below). Given a supertile extension $e = (t,p) \in \mc{S}$, we denote $r(e) \coloneqq t$ (considered as a prototile in $\PP$) and $s(e) \coloneqq p$. We construct the {\bf substitution graph} $G$ with vertex set $\mc{P}$, edge set $\mc{S}$ and source and range maps $s$, $r \colon \mc{S} \to \mc{P}$. The associated set of right-infinite (left-pointing) paths is denoted
\[
\mc{F} \coloneqq \{e_0 e_1 e_2 \cdots \mid s(e_i) = r(e_{i+1})\},
\]
and comes equipped with the left shift map $\sigma: \mc{F} \to \mc{F}$ defined by $\sigma(e_0 e_1 e_2 \cdots)=e_1 e_2 \cdots$. Generally, given a finite graph $G$, the set $\mc{F}$ of right-infinite words as above is called a \emph{topological Markov chain}.

\begin{remark} \label{rem: composition order}
One could use the opposite convention to above, taking an arrow $e \colon t \to p$ as a `subtile inclusion' of a tile $t$ into a $p$ supertile. In that case, all arrows on graphs would be reversed and one could take $\mc{F}$ as right-infinite, right-pointing paths. The advantage of the convention we take here instead is that a string $e_0 e_1 \cdots e_n$ may be read analogously to function composition (with range on the left, source on the right), and when introducing semigroup elements, which also have domain/codomain or `in/out' tiles, a valid string has consistently matching adjacent tiles in the domain/codomain or source/range, whether the term is a semigroup element or supertile extension term. By this convention arrows point in the direction of substitution application, which is also similar to standard conventions on inverse limits defining the tiling space, as we shall see in Section \ref{sec: limit space}.
\end{remark}

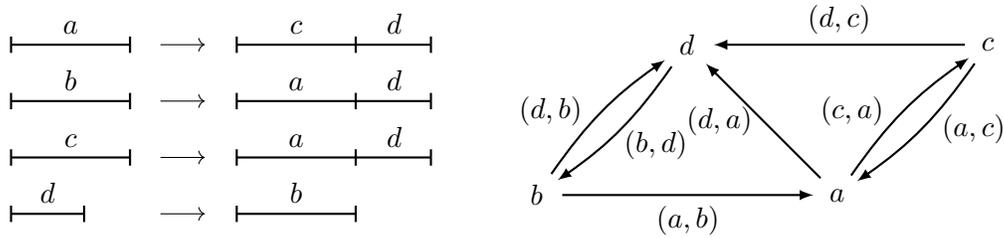
\begin{figure}
\begin{center}
\begin{tikzpicture}
\begin{scope}[xshift=0cm,yshift=2cm]
\draw[|-|,thick] (0,0)-- node[above]{$a$} (1.61,0);
\draw[|-|,thick] (3,0)-- node[above]{$c$} (4.61,0);
\draw[-|,thick] (4.61,0)-- node[above]{$d$} (5.61,0);
\draw[|-|,thick] (0,-0.75)-- node[above]{$b$} (1.61,-0.75);
\draw[|-|,thick] (3,-0.75)-- node[above]{$a$} (4.61,-0.75);
\draw[-|,thick] (4.61,-0.75)-- node[above]{$d$} (5.61,-0.75);
\draw[|-|,thick] (0,-1.5)-- node[above]{$c$} (1.61,-1.5);
\draw[|-|,thick] (3,-1.5)-- node[above]{$a$} (4.61,-1.5);
\draw[-|,thick] (4.61,-1.5)-- node[above]{$d$} (5.61,-1.5);
\draw[|-|,thick] (0,-2.25)-- node[above]{$d$} (1,-2.25);
\draw[|-|,thick] (3,-2.25)-- node[above]{$b$} (4.61,-2.25);
\draw[->] (2,-0)-- (2.59,0);
\draw[->] (2,-.75)-- (2.59,-.75);
\draw[->] (2,-1.5)-- (2.59,-1.5);
\draw[->] (2,-2.25)-- (2.59,-2.25);
\end{scope}
\begin{scope}[xshift=7cm,yshift=0cm]
\node[vertex] (vert_b) at (0,0)   {$b$};
\node[vertex] (vert_d) at (2,2)   {$d$}
	edge [<-,>=latex,out=215,in=55,thick] node[left,pos=0.5]{$(d,b) \ $} (vert_b)
	edge [->,>=latex,out=235,in=35,thick] node[right,pos=0.65]{$\ (b,d)$} (vert_b);
\node[vertex] (vert_c) at (6,2)   {$c$}
	edge [->,>=latex,out=180,in=0,thick] node[above,pos=0.5]{$(d,c)$} (vert_d);
\node[vertex] (vert_a) at (4,0)   {$a$}
	edge [<-,>=latex,out=35,in=235,thick] node[right,pos=0.5]{$\ (a,c)$} (vert_c)
	edge [->,>=latex,out=55,in=215,thick] node[left,pos=0.5]{$(c,a)\ $} (vert_c)
	edge [->,>=latex,out=135,in=-45,thick] node[left,pos=0.5]{$(d,a)$} (vert_d)
	edge [<-,>=latex,out=180,in=0,thick] node[below,pos=0.5]{$(a,b)$} (vert_b);
\end{scope}
\end{tikzpicture}
\end{center}
\caption{The border forcing Fibonacci substitution and its graph of supertile extensions.}
\label{fig:AP sub}
\end{figure}

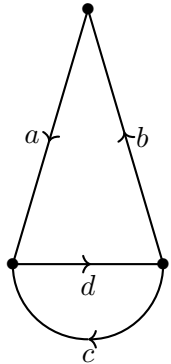
\begin{figure}
\begin{center}
\begin{tikzpicture}
\begin{scope}[xshift=0cm,yshift=0cm,decoration={markings,mark=at position 0.52 with {\arrow{>}}}]
\draw[-,thick,postaction={decorate}] (-1,0)-- node[below] {$d$} (1,0);
\draw[-,thick,postaction={decorate}] (1,0) -- node[right] {$b$} (0,3.387);
\draw[-,thick,postaction={decorate}] (0,3.387) -- node[left] {$a$} (-1,0);
\draw[-,thick] (-1,0) arc (180:270:1) node[below] {$c$};
\draw[<-,thick] (0,-1) arc (270:360:1);
\node at (-1,0) {$\bullet$};
\node at (1,0) {$\bullet$};
\node at (0,3.387) {$\bullet$};
\end{scope}
\end{tikzpicture}
\end{center}
\caption{The AP complex for the border forcing Fibonacci tiling \cite{AP}.}
\label{fig:AP complex fib}
\end{figure}

\begin{example}\label{ex: fib}
We illustrate supertile extensions and the substitution graph by studying a border forcing version of the Fibonacci tiling, as defined in \cite[Section 10.1]{AP}. In particular, starting with the usual Fibonacci substitution $0 \mapsto 01$ and $1 \mapsto 0$ we define our tiles as a sliding block code, with $a=0[0]1$, $b=1[0]0$, $c=1[0]1$ and $d=0[1]0$ (the terms in brackets are the tiles being collared, and terms to the left and right denote collaring information). From this we obtain the substitution $\varphi: \mc{P} \to \mc{P}^*$ given by
\begin{equation}\label{eq: Fib substitutions}
\varphi(a)=cd, \, \varphi(b)=ad, \, \varphi(c)=ad \text{ and } \varphi(d)=b.
\end{equation}
It is routine to check that this substitution is recognisable. Using \eqref{eq: Fib substitutions} we immediately obtain the supertile extensions:
\[
(c,a), (d,a), (a,b), (d,b), (a,c), (d,c) \text{ and } (b,d),
\]
which can unambiguously be labelled by pairs in $\mc{P}^2$ since at most one copy of each prototile appears in each supertile. The substitution appears in Figure \ref{fig:AP sub}, along with the graph associated with this substitution. Note that a smaller substitution (on a 3-letter alphabet) could be used to force the border for this example, by only collaring tiles on the left, since every supertile is adjacent to a $0$-tile on the right. \qed
\end{example}

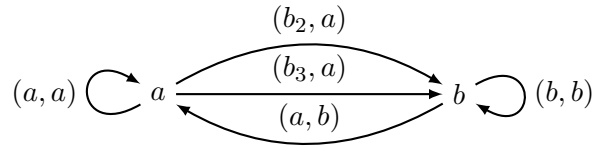
\begin{figure}
\begin{center}
\begin{tikzpicture}
\node (a) at (0,0)   {$a$};
\node (b) at (4,0)   {$b$};

\path[->,thick,>=latex]
(a) edge [out=30,in=150,"{$(b_2,a)$}"] (b)
(a) edge ["{$(b_3,a)$}"] (b)
(b) edge [out=210,in=-30] node[above] {$(a,b)$} (a)
(a) edge [out=210, in=150, looseness=10, "{$(a,a)$}"] (a)
(b) edge [out=30, in=-30, looseness=10, "{$(b,b)$}"] (b);
\end{tikzpicture}
\end{center}
\caption{The supertile extension graph of Example \ref{ex: supertile duplicates}.}
\label{fig: supertile duplicates}
\end{figure}

\begin{example} \label{ex: supertile duplicates}
Consider the substitution on $\mc{P} = \{a,b\}$ defined by
\begin{equation*}
\varphi(a)=abb \text{ and } \varphi(b)=ab.
\end{equation*}
It is easy to see that this substitution is recognisable and forces the border since every supertile is followed by an $a$ tile to its right and is preceded by a $b$ to its left. In this case there two ways tile $b$ is extended into an $a$ supertile: by the $b$ being included as either the second or third letter. These could be distinguished by denoting them as, say, $(b_2,a)$ and $(b_3,a)$. The substitution graph is given in Figure \ref{fig: supertile duplicates}. \qed
\end{example}

We have a natural bijection between a right infinite sequence of supertile extensions
\begin{equation}\label{string}
(t_0,t_1)(t_1,t_2)(t_2,t_3) \cdots \in \mc{F}
\end{equation}
and the discrete hull $\Omp$ of all substitution tilings $T$ generated by $\sub$ (where here, and later, we allow a very minor abuse of notation by denoting a supertile extension by $(t_n,t_{n+1})$ with each $t_n$ simultaneously denoting a subtile of $\sub(t_{n+1})$ and also its corresponding prototile in $\PP$). Indeed, to such a string \eqref{string} the puncture of tile $t_0$ is placed on the origin and we obtain a sequence of inclusions $t_0 \subseteq \sub(t_1) \subseteq \sub^2(t_2) \subseteq \sub^3(t_3) \subseteq \cdots$. The nested patches $\sub^n(t_n)$ determine the entire tiling by the border forcing property. Conversely, a punctured tiling determines such a string by recognisability (which itself follows from FLC and aperiodicity).

There is a natural topology on $\mc{F}$ whose basis consists of clopen cylinder sets of all infinite strings starting with some given finite initial string. Under this topology, the above bijection induces a homeomorphism
\begin{equation} \label{eq: strings <-> transversal}
\tau \colon \mc{F} \xrightarrow{\cong} \Omp
\end{equation}
to the discrete hull $\Omp$ with the topology generated by \eqref{disc_top}. The map $\tau$ above constructs an infinite tiling with a puncture on the origin from an infinite string. We may also define $\tau$ on a finite string $s = (t_0,t_1)(t_1,t_2)\cdots (t_{n-1},t_{n})$ to obtain a finite marked patch. This is the patch $\sub^n(t_n)$, where the location of $t_0$ is positioned in this supertile according to how the supertile extensions embed into each other.

The discrete hull $\Omp$ is the object space of the {\bf (discrete) translation groupoid} 
\[
\Rp \coloneqq \{ (T-x(t),T) \in \Omp \times \Omp \mid t \in T \},
\]
and the product topology coming from $\Omp \times \R^d$ makes $\Rp$ a principal topological groupoid. Given a patch $P$ and tiles $t,t'$ in $P$, the sets
\begin{equation}\label{Rpunc_top}
V(t',P,t) \coloneqq \{ (T',T) \in \Rp \mid P-x(t) \subset T \text{ and } P-x(t') \subset T' \}
\end{equation}
are open in $\Rp$, and the set of all such sets forms a basis for the product topology. In this topology, $\Rp$ is an \'etale equivalence relation. See \cite{Kel1} for further details.

The open set $V(t',P,t) \subset \Rp$ defines a bijection from $U(P,t)$ to $U(P,t')$, which may be thought of as the `partial translation' which shifts the origin tile from $t$ to $t'$, within any tiling of $\Omp$ with $t \in P$ centred over the origin. For example, moving across a certain face from one prototile to an adjacent one may be interpreted as such an open subset of the discrete groupoid, or as a partial bijection within $\Omp$ (or, via the identification $\tau$, within the Markov shift $\mc{F}$ for a substitution tiling). This collection of partial translations naturally leads us to the tiling semigroup:

\subsection{The tiling semigroup}

We recall Kellendonk's construction of the inverse semigroup associated to an FLC tiling \cite{KL,KelPut}. We use notation similar to \cite{KL} for the elements of this semigroup, the doubly pointed patches.

A semigroup $S$ is an \textbf{inverse semigroup} if for each $s \in S$ there exists a unique element $s^* \in S$ such that $ss^*s=s$ and $s^*ss^*=s^*$. According to the Vagner--Preston Representation Theorem \cite[Theorem V.1.10]{Howie}, every inverse semigroup is isomorphic to a subsemigroup of $\mathcal{I}(X)$, the inverse semigroup of partial bijections on a set $X$. An \emph{action} of an inverse semigroup $S$ on a set $X$ is a homomorphism $\pi:S \to \mathcal{I}(X)$. If the homomorphism $\pi$ is fixed, we usually write $g \cdot x$ for $\pi_g(x)$. See \cite{Exel} for further details.

\begin{definition}
A {\bf doubly pointed patch} $[b,P,a]$ is given by a finite patch $P$ and tiles $a,b \in P$, where we take the tuple $(b,P,a)$ up to translation equivalence. Let $\mc{T}$ be the set of all doubly pointed patches along with a `zero element' $0 \in \mc{T}$.

Let $[d,Q,c]$, $[b,P,a] \in \mc{T}$ be two doubly pointed patches which, without loss of generality (by translating each, if necessary) have $x(b) = x(c)$. If $P$ and $Q$ agree on any tiles with intersecting interiors and $P \cup Q$ is a valid patch, then we define
\[
[d,Q,c] [b,P,a] = [d,P \cup Q,a].
\]
Otherwise, we define $[d,Q,c] [b,P,a] = 0$. Any product with $0$ is defined as $0$. We call $\mc{T} = (\mc{T},\cdot)$ the {\bf tiling semigroup}.
\end{definition}

\begin{remark}
One could also define the tiling semigroup for a space $\Omega$ of tilings, considering patches over all tilings in $\Omega$. For $\Om = \Om_T$ with $T$ a repetitive tiling, all tilings of $\Om$ have the same finite patches, up to translation, so this does not affect the construction. However, if we consider $\Omega_\sub$ with $\sub$ non-primitive then the sets of finite patches can differ between orbits. We take the full collection of patches over all of $\Omega_\sub$ in such a case. We note, typically we are most interested in the case where $\sub$ is primitive and $\Omega_\sub = \Omega_T$ for any $T \in \Omega_\sub$, with each such being repetitive.
\end{remark}

Notice that, again, our notation here mirrors function composition: the element $[b,P,a]$ has `in tile' $a$ and `out tile' $b$, with a product of elements $[d,Q,c][b,P,a]$ interpreted as applying the right then the left-hand term, and requiring that the intermediate tiles $b$ and $c$ agree. There is a bijective correspondence between elements of $\mc{T}$ and basis elements of the \'etale topology for $\Rp$ as described in \eqref{Rpunc_top} via $[b,P,a] \mapsto V(b,P,a)$ (and where $0$ has empty (co)domain). The product of semigroup elements is identified with the composition of partial bijections on the largest compatible domain. Thus, the tiling semigroup $\mc{T}$ naturally acts by partial bijections on the discrete hull $\Omp$, where a doubly pointed patch $g = [b,P,a]$ has domain $U(P,a) \subset \Omp$ and codomain $U(P,b) \subset \Omp$.

It is easy to establish that $\mc{T}$ is an inverse semigroup. In particular, note that for $s = [b,P,a] \in \mc{T}$, the unique $t \in \mc{T}$ with $s = sts$ and $t = tst$ is given by $t = [a,P,b]$. We interpret $[b,P,a]$ as a \emph{translation from $a$ to $b$, within $P$}, where the product of two such translations is allowed when the union is itself a valid patch. The idempotents (or `partial identities') are of the form $[a,P,a]$, along with the $0$ element. Since $\mc{T}$ is an inverse semigroup, it naturally inherits a partial ordering: we have that $[b,P,a] \preceq [d,Q,c]$ if and only if, up to translation, we have an inclusion $(d,Q,c) \subseteq (b,P,a)$ of doubly pointed patches; that is, $P$ extends $Q$ as a doubly pointed patch with $a=c$ and $b=d$. It is not hard to see that $\mc{T}$ is generated by idempotents $[p,\{p\},p]$ for $p \in \mc{P}$, where $\{p\}$ is a single-tile patch, and elements $[b,P,a]$, where $P$ is a connected two-tile patch containing distinct $a$ and $b$. 

\section{Self-similarity of substitution tiling semigroups} \label{sec: Self-similarity of substitution tiling semigroups}

We now show that the above semigroup action of $\mc{T}$ on $\mc{F} \cong \Omp$ is self-similar.

\begin{definition}[{\cite[Definition 3.6]{BGN}}] \label{def: self-similar}
Let $\mc{F}$ be a topological Markov chain over an alphabet $X$. An inverse semigroup $G$ acting on $\mc{F}$ is called {\bf self-similar} if for every $g \in G$ and $x \in X$ there exist $y_1$, \ldots, $y_k \in X$ and $h_1$, \ldots, $h_k \in G$ such that the sets $\dom(h_i)$ are disjoint, $\bigcup_{i=1}^k x \dom(h_i) = x\mc{F} \cap \dom(g)$, and for every $xw \in \mc{F}$ we have
\begin{equation} \label{eq: self-sim}
g \cdot xw = y_i (h_i \cdot w),
\end{equation}
where $i$ is such that $w \in \dom(h_i)$.
\end{definition}

Often in the context of self-similar groups and semigroups the action of $g$ on $w$ is denoted by $w^g$, but here we choose to use $g \cdot w$. Note that the $h_i$ in \eqref{eq: self-sim} are not uniquely defined. Indeed, given a partial bijection $h_i$ one could partition its domain and use instead the restrictions of $h_i$ to each such subset. From the opposite perspective, one may always replace the expression $g \cdot w$ with $h \cdot w$ whenever $h$ is an extension of $g$ to a larger domain. This will be useful later in simplifying the semigroup elements $h_i$ generated by successive application of \eqref{eq: self-sim}.

\begin{remark}
We briefly explain here an equivalent description of self-similarity to highlight its algorithmic quality (and note that one may equivalently define self-similarity of inverse semigroup actions by automata, see \cite{Nek_SSIS}). We make the standing assumption throughout that all semigroup elements have clopen domains. For each $g \in G$, there is some $N(g) = N \in \N$, given by the distance required to `read forwards' in the sequence to evaluate the first letter of $g \cdot w$, for an infinite word $w \in \mc{F}$, as well as determining the necessary semigroup element to apply to the remainder of the string. Let $\mc{F}^N$ denote the set of words of length $N$. Self-similarity means that there exist letters $y_1$, \ldots, $y_\ell \in X$, elements $h_1$, \ldots, $h_\ell \in G$ and subsets $S_i \subseteq \mc{F}^N$ so that:
\begin{enumerate}
	\item $\dom(g) = \bigcup_{i=1}^\ell S_i \mc{F}$, where $S_i \mc{F}$ denotes the set of infinite words with initial $N$-letter string in $S_i$;
	\item $S_i \cap S_j = \emptyset$ for $i \neq j$ (so the above is a disjoint union);
	\item for each $i = 1$, \ldots, $\ell$ we have $S_i \mc{F} = x_i\dom(h_i)$ for some $x_i \in X$;
\end{enumerate}
Then given an infinite word $w \in \mc{F}$, to evaluate $g \cdot w$ we first determine the initial $N$-letter string $s$ of $w$. We have that $s \in S_i$ for a unique $i$, and then we have the rule:
\[
g \cdot w = y_i (h_i \cdot \sigma(w)),
\]
where $\sigma \colon \mc{F} \to \mc{F}$ is the left shift (i.e., the map removing the initial letter $x_i$ from $w$). Thus, the first letter of $g \cdot w$ is $y_i$. To apply $h_i$ to the remainder, we look forward distance $N(h_i)$ in $\sigma(w)$ to determine the second letter of $g \cdot w$, as well as the next element of $G$ to apply to $\sigma^2(w)$. This may be repeated indefinitely. This is best demonstrated here through Example \ref{geometric intuition}, which may help the reader with the following proof.
\end{remark}

\begin{theorem}
The tiling semigroup $\mc{T}$ of a recognisable substitution tiling is self-similar.
\end{theorem}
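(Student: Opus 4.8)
The plan is to verify Definition~\ref{def: self-similar} directly, using the geometric principle that a partial translation between two tiles lifts to a partial translation between the $1$-supertiles containing them, applied to the de-substituted address. We may assume $g = [b,P,a] \neq 0$, since the zero element acts with empty domain and admits the empty decomposition. Recall that, under the homeomorphism $\tau\colon\mc{F}\xrightarrow{\cong}\Omp$, the element $g$ acts as the partial translation with domain $U(P,a)$ taking a tiling with $a$ over the origin to the translate with $b$ over the origin, and that a letter $e_0 = (t_0,t_1)\in\mc{S}$ records that the origin tile $t_0$ sits inside the supertile $\sub(t_1)$. Given such a letter $x=(t_0,t_1)$, if $t_0\neq a$ then $x\mc{F}\cap\dom(g)=\varnothing$ and there is nothing to decompose; so assume $t_0=a$.

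First I would make the lift to the supertile level precise. For $xw\in\dom(g)$ the tiling $T=\tau(xw)$ contains $P$ with $a$ over the origin and carries a $1$-supertile decomposition in which $a\subset\sub(t_1)$; the word $w=(t_1,t_2)(t_2,t_3)\cdots$ is exactly the address of the de-substituted tiling $\tau(w)$, which has $t_1$ over the origin and whose substitution $\sub(\tau(w))$ agrees with $T$ after the translation realigning punctures. The tile $b$ lies in a unique $1$-supertile $\sub(t_1')$, and I would record the new initial letter $y=(b,t_1')\in\mc{S}$ together with the supertile-level move $h=[t_1',Q,t_1]$, where $Q$ is the patch of the de-substituted tiling consisting of the tiles whose substitutions are the $1$-supertiles needed to contain $P$. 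Because the de-substituted tilings again lie in $\Omega_\sub$, the patch $Q$ is valid and $h$ is a genuine element of $\mc{T}$; the convention on $\mc{S}$ ensures that $s(y)=t_1'$ agrees with the range of the first letter of $h\cdot w$, so that $y(h\cdot w)$ is a legal path. The content of the self-similarity identity $g\cdot xw = y(h\cdot w)$ is then the commuting square: de-substitute $T$ to $\tau(w)$, translate supertiles by $h$, re-substitute, and read off that the origin tile is now $b$ sitting inside $\sub(t_1')$.

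Next I would produce the finite, disjoint decomposition. When $b$ lies inside the support of $a$'s own $1$-supertile $\sub(t_1)$, the target $t_1'=t_1$ and the connecting patch $Q$ are forced by $x$ alone, and one may take $h$ to be a partial identity. When $b$ lies outside $\sub(t_1)$, the supertile containing it is a neighbour of $\sub(t_1)$, and which neighbour occurs depends on the local supertile configuration, hence on finitely many initial letters of $w$; here finite local complexity and border forcing guarantee a uniform read-depth $N=N(g)$ so that the first $N$ letters of $w$ determine both the supertile $\sub(t_1')$ containing $b$ and the patch $Q$. This splits $x\mc{F}\cap\dom(g)$ into finitely many clopen cylinder sets, indexed by the admissible supertile configurations around $\sub(t_1)$ that contain $P$; labelling these by $i$ gives $y_i=(b,t_1')$ and $h_i=[t_1',Q_i,t_1]$. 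Restricting each $h_i$ to the corresponding cylinder (permissible by the remark following Definition~\ref{def: self-similar}) makes the domains $\dom(h_i)$ pairwise disjoint, and that $\bigcup_i x\dom(h_i)=x\mc{F}\cap\dom(g)$ follows from recognisability: the supertile patch $Q_i$ appears around $t_1$ in $\tau(w)$ if and only if its substitution, hence $P$, appears around $a$ in $\tau(xw)$.

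The main obstacle I anticipate is the geometric bookkeeping of the lift rather than any single hard estimate: one must check that every tile of $P$, and in particular $b$, lies in a well-defined $1$-supertile, that each connecting patch $Q_i$ is itself a legitimate patch of some tiling in $\Omega_\sub$ so that $h_i\in\mc{T}$, and---most delicately---that border forcing yields a uniform finite read-depth $N(g)$ making the case split into finitely many clopen pieces whose images under substitution recover exactly the domain $U(P,a)$. Recognisability (equivalently aperiodicity) is what guarantees that the de-substituted address $w$ is unambiguous, while border forcing is what guarantees the finite look-ahead; together they close the verification of \eqref{eq: self-sim}.
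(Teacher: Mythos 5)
Your proposal is correct and takes essentially the same route as the paper's proof: both lift the partial translation $g=[b,P,a]$ to the $1$-supertile level, invoke border forcing to get a uniform read-depth $N(g)$ so that the finitely many admissible initial $N$-strings whose patches contain $P$ index the decomposition, and verify \eqref{eq: self-sim} via the de-substitute/translate/re-substitute square. The only differences are cosmetic: the paper takes $h_i=[b',\tau(s_i'),a']$ with the full patch of the shifted $(N-1)$-string, so the domains are automatically the disjoint cylinders, whereas you take a smaller connecting patch $Q_i$ and then restrict its domain to the cylinder (permissible, as you note); and your aside that the supertile containing $b$ must be a \emph{neighbour} of $\sub(t_1)$ is only literally true for small $P$, though nothing in your argument actually depends on it.
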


\begin{proof}
Let $g = [b,P,a]$ be a doubly pointed patch and recall that the domain of $g$ is $U(P,a)$ corresponding to tilings with patch $P$ at the origin centred at the puncture $x(a)$ of tile $a$. Since $\sub$ forces the border there is some $N \in \N$ so that any $w \in \mc{F}$, with $\tau(w) \in U(P,a)$, has initial string $s \in \mc{F}^N$ with $P \subseteq \tau(s)$. In particular, there is a finite set $S = \{s_1,\ldots,s_k\} \subseteq \mc{F}^N$ of all possible length-$N$ strings for which $P \subseteq \tau(s_i)$.

Fix $s_i \in \mc{F}^N$ and define $h_i \in \mc{T}$ by
\[
h_i = [b',\tau(s_i'),a'],
\]
where $s'_i \in \mc{F}^{N-1}$ is given by removing the first term of $s_i$, and $a'$ and $b'$ are the tiles of $\tau(s_i')$ whose substitutes contain the tiles $a$, $b \in P$. Let $y_i \in \mc{S}$ denote the unique supertile extension that includes $b \in P \subset \tau(s_i)$ into its 1-supertile $b' \in \tau(s_i')$.

We may now check that the above assignments fulfil the definition of self-similarity. Take any $w \in \dom(g)$. Then $w$ has initial $N$-letter string $s_i \in S$. We must check that
\begin{equation}\label{tiling_SS}
g \cdot w = y_i( h_i \cdot \sigma(w)).
\end{equation}
By definition, $\tau(w)$ is a tiling $T \in U(P,a)$ and $\tau(g\cdot w)=T-x(b)$ so that $T-x(b) \in U(P,b)$. Let $T' = \tau(\sigma(w))$ which is the 1-supertiling of $T$ with the puncture of tile $a'$ at the origin. Then our definition of $h_i$ implies that $\tau(h_i \cdot \sigma(w))$ is the 1-supertiling with the puncture of tile $b'$ at the origin. Pre-appending $y_i$ corresponds to substituting this tiling, translated appropriately to position the puncture of $b$ over the origin. Thus \eqref{tiling_SS} holds, as required.
\end{proof}

\begin{example}\label{geometric intuition}
We consider the Fibonacci substitution from Example \ref{ex: fib}. We denote by $P_{xy}$ the two-tile patch consisting of $x$, $y \in \mc{P}$, with $x$ on the left and $y$ on the right. Later, in Example \ref{ex: fib rules}, we give a complete set of rules on applying doubly-pointed two-tile patches to strings. To be applied, such elements need to look at either the next term, or the next two terms. However, we quickly give an example application to give the flavour of the definitions above, where one sees the group element working algorithmically through the string:

\begin{figure}
\begin{center}
\begin{tikzpicture}
\begin{scope}[xshift=0cm,yshift=0cm]
\draw[|-|,dashed] (-5.22,0) -- node[below,pos=0.5]{$a$} (-3.61,0);
\draw[|-|,dashed] (-3.61,0) -- node[below,pos=0.5]{$d$} (-2.61,0);
\draw[|-|] (-2.61,0) -- node[below,pos=0.5]{$c$} (-1,0);
\draw[|-|] (-1,0) -- node[below,pos=0.5]{$d$} (0,0);
\draw[|-|] (0,0) -- node[below,pos=0.5]{$b$} (1.61,0);
\draw[|-|,dashed] (1.61,0) -- node[below,pos=0.5]{$a$} (3.22,0);
\draw[|-|,dashed] (3.22,0) -- node[below,pos=0.5]{$d$} (4.22,0);
\draw[|-|,dashed] (4.22,0) -- node[below,pos=0.5]{$b$} (5.83,0);
\draw[|-|,dashed] (5.83,0) -- node[below,pos=0.5]{$a$} (7.44,0);
\draw[|-|,dashed] (7.44,0) -- node[below,pos=0.5]{$d$} (8.44,0);
\draw[|-|,dashed] (-5.22,2) -- node[below,pos=0.5]{$b$} (-2.61,2);
\draw[|-|] (-2.61,2) -- node[below,pos=0.5]{$a$} (0,2);
\draw[|-|] (0,2) -- node[below,pos=0.6]{$d$} (1.61,2);
\draw[|-|,dashed] (1.61,2) -- node[below,pos=0.6]{$c$} (4.22,2);
\draw[|-|,dashed] (4.22,2) -- node[below,pos=0.5]{$d$} (5.83,2);
\draw[|-|,dashed] (5.83,2) -- node[below,pos=0.5]{$b$} (8.44,2);
\draw[|-|,dashed] (-5.22,4) -- node[below,pos=0.5]{$d$} (-2.61,4);
\draw[|-|] (-2.61,4) -- node[below,pos=0.45]{$b$} (1.61,4);
\draw[|-|,dashed] (1.61,4) -- node[below,pos=0.6]{$a$} (5.83,4);
\draw[|-|,dashed] (5.83,4) -- node[below,pos=0.6]{$d$} (8.44,4);
\draw[|-|] (-2.61,6) -- node[below,pos=0.45]{$d$} (1.61,6);
\node[vertex] (vert_l) at (0.8,0) {};
\node[vertex] (vert_r) at (2.41,0) {}
	edge [<-,>=latex,out=135,in=45,thick] node[above,pos=0.5]{$[a,P_{ba},b]$} (vert_l);
\node[vertex] (vert_ul) at (0.8,2)  {}
	edge [->,>=latex,out=270,in=90,thick] node[left,pos=0.5]{$(b,d)$} (vert_l);
\node[vertex] (vert_ur) at (3,2)  {}
	edge [<-,>=latex,out=135,in=45,thick] node[above,pos=0.5]{$[c,P_{dc},d]$} (vert_ul)
	edge [->,>=latex,out=270,in=90,thick] node[right,pos=0.5]{$(a,c)$} (vert_r);
\node[vertex] (vert_uul) at (-0.4,4)  {}
	edge [->,>=latex,out=270,in=90,thick] node[left,pos=0.5]{$(d,b)$} (vert_ul);
\node[vertex] (vert_uur) at (3.8,4)  {}
	edge [<-,>=latex,out=135,in=45,thick] node[above,pos=0.5]{$[a,P_{ba},b]$} (vert_uul)
	edge [->,>=latex,out=270,in=90,thick] node[right,pos=0.5]{$(c,a)$} (vert_ur);
\node[vertex] (vert_uuul) at (-0.4,6)  {}
	edge [->,>=latex,out=270,in=90,thick] node[left,pos=0.5]{$(b,d)$} (vert_uul);
\node at (0,0) {$\ddag$};
\node at (1.61,0) {$\ddag$};
\node at (4.22,0) {$\ddag$};
\node at (5.83,0) {$\ddag$};
\node at (8.44,0) {$\ddag$};
\node at (-5.22,0) {$\ddag$};
\node at (-2.61,0) {$\ddag$};
\node at (1.61,2) {$\ddag$};
\node at (-2.61,2) {$\ddag$};
\node at (-5.22,2) {$\ddag$};
\node at (5.83,2) {$\ddag$};
\node at (1.61,4) {$\ddag$};
\node at (8.44,4) {$\ddag$};
\node at (-2.61,4) {$\ddag$};
\node at (1.61,6) {$\ddag$};
\end{scope}
\end{tikzpicture}
\end{center}
\caption{A graphical representation of the patch formed by the prefix $(b,d)(d,b)(b,d)$ of the word $w$. Tile lengths are increased by the golden ratio at each increasing level. The vertical arrows on the left denote the tile inclusions $(b,d)(d,b)(b,d)$. The bottom horizontal arrow depicts the action of moving one tile to the right by a semigroup element and a non-idempotent element of $\mc{T}$ acts on the remainder of the string if we translate across a supertile boundary, denoted by double daggers. Thus, the vertical arrows on the right represent the output of applying the semigroup element $[a,P_{ba},b]$ to $(b,d)(d,b)(b,d)$ as shown in \eqref{fib_comp_2}. Note that the solid tiles are given directly from the tile inclusions and the dashed tiles are defined implicitly by the border forcing property.}
\label{fig:Fib_tiling_example}
\end{figure}
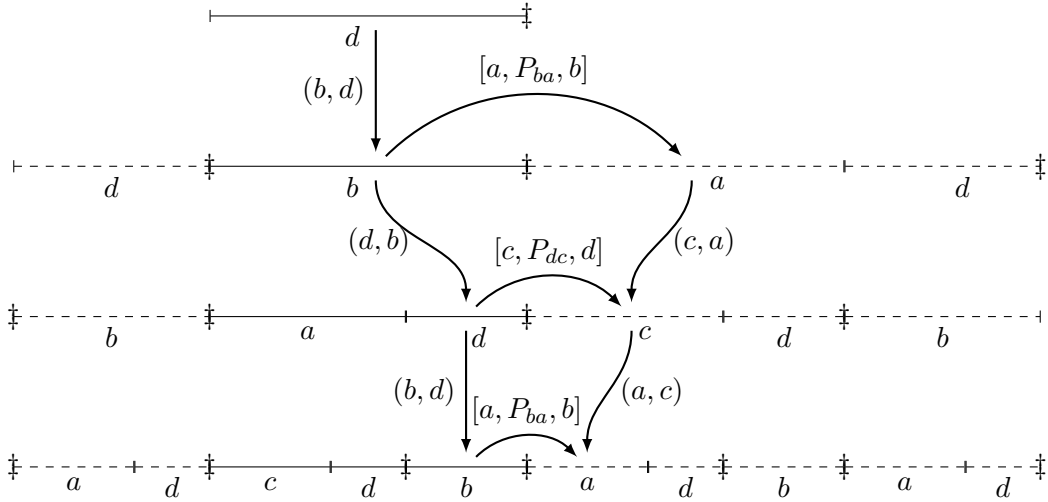

Take the infinite word
\[
w = (b,d)(d,b)(b,d) e f z \in \mc{F}
\]
where $(b,d)$, $(d,b)$ and $(b,d)$ are specified supertile extensions in $\mc{S}$, $e$ and $f$ are in $\mc{S}$ and $z \in \mc{F}$ is an infinite tail. We apply the element $[a,P_{ba},b]$ to $w$ according to the rules found in Example \ref{ex: fib}, which corresponds to moving one tile to the right in the tiling formed by $w$. It is important to note that all these relations can be deduced from the information given, as depicted in Figure \ref{fig:Fib_tiling_example}.
\begin{align}
\notag
[a,P_{ba},b] \cdot (b,d)(d,b)(b,d)e f z  & =(a,c) \left( [c,P_{dc},d] \cdot (d,b) (b,d) efz \right) \\
\label{fib_comp_2}
&=(a,c)(c,a) \left( [a,P_{ba},b] \cdot (b,d)e f z \right)
\end{align}

At this stage, we cannot evaluate $[a,P_{ba},b]$ without knowing $e$. If, for example, $e=(d,a)$ and $f=(a,b)$ then one finds that \eqref{fib_comp_2} evaluates as
\begin{align*}
(a,c)(c,a) \left( [a,P_{ba},b] \cdot (b,d)(d,a)(a,b) z \right) &=(a,c)(c,a)(a,b) \left( [b,P_{db},d] \cdot (d,a)(a,b) z \right) \\ 
&=(a,c)(c,a)(a,b)(b,d) \left( [d,P_{ad},a] \cdot (a,b) z \right) \\ 
&=(a,c)(c,a)(a,b)(b,d)(d,b) \left( [b,P_{b},b] \cdot  z \right) \\ 
&=(a,c)(c,a)(a,b)(b,d)(d,b)z \\ 
\end{align*}
where the last line is fully evaluated, since the idempotent $[b,P_b,b]$ acts as the identity. If instead we took $e=(d,b)$ then one finds that \eqref{fib_comp_2} evaluates as
\begin{align*}
(a,c)(c,a) \left( [a,P_{ba},b] \cdot (b,d)(d,b)f z \right) &=(a,c)(c,a)(a,c) \left( [c,P_{dc},d] \cdot (d,b)f z \right) \\
&=(a,c)(c,a)(a,c)(c,a) \left( [a,P_{ba},b] \cdot f z \right) 
\end{align*}
and we cannot evaluate further without knowing both $f$ and the first letter of $z$.

If we translate left one tile instead, by applying $[d,P_{db},b]$ to $w$, then we find:
\begin{align*}
[d,P_{db},b] \cdot (b,d) (d,b)(b,d) e f z &=(d,a) \left( [a,P_{ad},d] \cdot (d,b) (b,d)e f z\right)\\
&= (d,a) (a,b) \left( [b,P_b,b] \cdot (b,d)  e f z \right) \\
&= (d,a) (a,b)(b,d) e f z
\end{align*}
and the application is fully evaluated, again because $[b,P_b,b]$ is an idempotent. We encourage the reader to consider translating to the left in Figure \ref{fig:Fib_tiling_example} to work out similar equations.

Notice the necessary consistency in the above strings: there is agreement between adjacent tile types of both the supertile extension pairs $(y,x)$ and the `in/out' tiles of the doubly pointed patches $[y,P,x]$. As in Remark \ref{rem: composition order}, this follows from the convention of orientation in the substitution graph and writing strings in an order corresponding to function composition.
\qed
\end{example}

One should observe that, in terms of the action of $\mc{T}$ on $\mc{F}$, there is some degree of superfluous information in the semigroup elements:  for $g \in \mc{T}$ and $T \in \dom(g)$, all that is required to evaluate $g(T)$ is the relative displacement of the `in and out' tiles. This fact is also reflected algebraically in terms of the inverse semi-group: For a general inverse semigroup one has the partial ordering defined by letting $x \preceq y$ if there is an idempotent $e$ for which $x = ey$. For the tiling semigroup $\mc{T}$, this says that $[b,P,a] \preceq [d,Q,c]$ if and only if, up to translation, we have an inclusion of doubly pointed patches $(d,Q,c) \subseteq (b,P,a)$ (i.e., $P$ extends $Q$ with $a=c$ and $b=d$). If $x \preceq y$ then $\dom(x) \subseteq \dom(y)$ and $x \cdot \tau^{-1}(T) = y \cdot \tau^{-1}(T)$ for all $T \in U(P,a)$.

A consequence of the above is that there is significant choice of semigroup elements satisfying the self-similarity rule \eqref{eq: self-sim}. This is easily dealt with in practice since, if we ignore the domain and range of elements, we may always replace a term such as $h_i \cdot w$ with $j \cdot w$ for any $j \succeq h_i$ in \eqref{eq: self-sim}. In fact, after enough iterations, we see that we may take $j$ to be a `small' patch. This is made precise via the following more general definition.

\begin{definition} \label{def: contracting}
Let $(G,\mc{F})$ be a self-similar inverse semigroup. We call $G$ {\bf contracting} if there exists some finite $N \subseteq G$ satisfying the following: For any $g \in G$ there exists some $k \in \N$ for which, for any $uw \in dom(g)$ with $u \in \mc{F}^k$, there exists some $v \in \mc{F}^k$ and $h \in N$ with
\begin{equation} \label{eq: contracting}
g \cdot (uw) = v (h \cdot w).
\end{equation}
\end{definition}

\begin{remark}
\label{rem: alt contracting}
We can write the above definition in the following alternative way. There exists some finite $N \subseteq G$ satisfying the following: For any $g \in G$ there exists some $k \in \N$ for which, for every $w \in \dom(g)$, there is some $v \in \mc{F}^k$ and $h \in N$ with
\begin{equation} \label{eq: alt contracting}
g \cdot w = v (h \cdot (\sigma^k w)).
\end{equation}
\end{remark}

In the standard language of self-similar group actions, the above says that after sufficiently many applications of the `restriction' of $g$ (the elements $h_i$ of \eqref{eq: self-sim}) the new semigroup element to apply to the remainder of the string may be taken in the finite set $N$, at least after an appropriate adjustment of its domain and range.

\begin{definition}
Let $(G,\mc{F})$ be a contracting self-similar inverse semigroup. Call $\NN$ a \emph{semi-nucleus} if it satisfies the contracting condition above and is such that for all $g \in \NN$ and $ew \in \dom(g)$ with $e \in \mc{S} = \mc{F}^1$, there is some $f \in \mc{S}$ and $h \in \NN$ so that
\begin{equation} \label{eq: semi-nucleus}
g \cdot (ew) = f (h \cdot w).
\end{equation}
\end{definition}

The above says that not only do all semigroup elements eventually `restrict' to elements in $\NN$ (possibly after extended the domain and range), we also have that a single iteration of restriction of an element of $\NN$ can be chosen to remain in $\NN$. That is, we may take $k=1$ for any $g \in N$ in Definition \ref{def: contracting}. 

\begin{lemma} \label{lem: semi-nucleus}
A contractive self-similar inverse semigroup has a semi-nucleus.
\end{lemma}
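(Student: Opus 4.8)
The plan is to start from a finite contracting set $N \subseteq G$ (which exists by hypothesis) and to define $\NN$ as the closure of $N$ under the one-step restriction operation implicit in \eqref{eq: self-sim}. Concretely, for $g \in G$ and $e \in \mc{S}$ with $e\mc{F} \cap \dom(g) \neq \varnothing$, self-similarity produces finitely many pairs $(f_j,h_j)$ with disjoint clopen domains satisfying $g \cdot (ew) = f_j(h_j \cdot w)$ on $\dom(h_j)$; I call each such $h_j$ a one-step restriction of $g$. Let $\NN$ be the smallest subset of $G$ containing $N$ and closed under taking one-step restrictions. With this definition the restriction clause \eqref{eq: semi-nucleus} of the semi-nucleus definition holds immediately: given $g \in \NN$ and $ew \in \dom(g)$, the one-step restriction $h$ whose domain contains $w$ lies again in $\NN$, and $g \cdot (ew) = f(h \cdot w)$ for the matching $f \in \mc{S}$. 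Moreover, since $N \subseteq \NN$ and $N$ witnesses Definition \ref{def: contracting}, any superset of $N$ witnesses it as well; so $\NN$ will be a semi-nucleus the moment it is shown to be finite.

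The entire content of the lemma is thus the finiteness of $\NN$, and this is the step I expect to be the main obstacle. I will regard $G$ as a sub-inverse-semigroup of $\mc{I}(\mc{F})$, so that an element is pinned down by its action as a partial bijection, that is, by its clopen domain together with the rule by which it acts; the argument then bounds these two data separately. For the rule, set $k^{*} = \max_{g \in N} k(g)$ using the depths supplied by the contracting property. An induction on depth shows that any iterated restriction obtained from an element of $N$ by reading at least $k^{*}$ letters agrees, on its domain, with (a restriction of) a fixed element of $N$: once $k(g)$ letters are read the restriction agrees on its domain with some $n \in N$ by \eqref{eq: contracting}, and applying the contracting property to this $n \in N$ and continuing shows that, after a bounded look-ahead, the action is always expressible through one of the finitely many elements of $N$. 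Hence only finitely many action rules occur.

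For the domain, recall each $\dom(g)$ with $g \in N$ is clopen, hence a finite union of cylinder sets, say of lengths at most $L$. The domain of the restriction read off along a word $u$ is $\{w \in \mc{F} \mid uw \in \dom(g)\}$, and as soon as $|u| \geq L$ this (when nonempty) is just the set of admissible continuations from the terminal letter of $u$ in the Markov chain. Since the alphabet is finite, only finitely many domains arise among sufficiently deep restrictions. Combining the two bounds yields finitely many deep restrictions; together with the finitely many restrictions occurring in the first $\max(L,k^{*})$ steps (a bounded number of steps, each with finite branching), this shows $\NN$ is finite and completes the proof.

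The one subtlety I would be careful with sits inside the rule bound: because the element $h$ in \eqref{eq: contracting} may depend on the tail $w$ and not only on the word $u$ read so far, a single deep restriction need not equal one element of $N$, but only agree with various elements of $N$ on the pieces of a finite clopen partition of its domain. That partition is controlled by a bounded look-ahead (bounded precisely because only finitely many rules occur), so there are only finitely many resulting labellings of prefixes by elements of $N$; this is what keeps the count of rules finite, and is the point at which the estimate must be made rather than waved through.
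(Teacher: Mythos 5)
Your reduction of the lemma to finiteness of $\NN$ contains the genuine gap, and it sits exactly where you flagged it. First, ``the smallest subset of $G$ containing $N$ and closed under taking one-step restrictions'' is not a canonical object: the $h_j$ in \eqref{eq: self-sim} are not uniquely determined (one may partition domains or pass to extensions), so either you close under \emph{all} valid one-step restrictions---in which case $\NN$ is typically infinite, since any restriction of a valid $h_j$ to a clopen subset of its domain is again a valid choice (in the tiling semigroup, every patch extension $[b,P',a]$ of $[b,P,a]$ yields such an element)---or you must fix a choice function at every depth, and then finiteness is exactly what must be proved. Second, your finiteness argument is circular at the decisive point: since the element $h$ in \eqref{eq: contracting} depends on the tail $w$ and not only on the prefix read, a deep restriction is only a patchwork of elements of $N$ over a clopen partition of its domain, and you justify that this partition has bounded look-ahead ``precisely because only finitely many rules occur,'' which is the statement being proved. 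Relatedly, your domain bound assumes the restriction along $u$ has domain $\{w \mid uw \in \dom(g)\}$, but \eqref{eq: self-sim} only says the domains of the $h_j$ \emph{partition} this set---the rule $g \cdot xw = y_j(h_j \cdot w)$ forces a split whenever different tails produce different output letters $y_j$---so individual restriction domains need not stabilise to full follower sets of the Markov chain.

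The paper never proves any restriction closure finite, and this is the idea your proposal is missing. Because \eqref{eq: semi-nucleus} is a \emph{pointwise} condition---for each $ew \in \dom(g)$ one only needs \emph{some} $h \in \NN$ with $g \cdot (ew) = f(h \cdot w)$, not that the literal restriction element lie in $\NN$---the paper takes $\NN$ to be $N$ together with the restrictions $h_i^j$ of elements $g \in N$ of depth $j < k(g)$ only, a set that is finite simply by finite branching over a bounded number of steps. Elements of depth $j < k(g)-1$ restrict into $\NN$ by construction; for an element of depth $k(g)-1$, one further application of \eqref{eq: self-sim} produces some $h_\ell^{k}$, and comparison with \eqref{eq: contracting} (the length-$k$ prefix of the output is determined, so the tails must agree) gives $h_\ell^{k} \cdot w = h \cdot w$ for some $h \in N$ possibly depending on $w$---which is all that \eqref{eq: semi-nucleus} asks. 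The recursion is thus deliberately truncated at depth $k(g)-1$ and looped back into $N$ pointwise, so the estimate you rightly identified as unavoidable in your approach never has to be made.
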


\begin{proof}
Let $N \subseteq G$ be as required for the definition of contractivity and $g \in N$, satisfying \eqref{eq: alt contracting} for $k  = k(g) \in \N$. Let $u = u_1 \cdots u_k \in \mc{F}^k$ and $w \in \mc{F}$ with $uw \in \dom(g)$.

Consider the elements $h_1^1$, $h_2^1$, \ldots, $h_n^1 \in G$ arising from \eqref{eq: self-sim} for $g$ and $x = u_1$. For each $h_i^1$, apply \eqref{eq: self-sim} again with $x = u_2$ to obtain elements $h_1^2$, $h_2^2$, \ldots, $h_m^2$. We continue this procedure to generate elements $h_i^j$ for $j = 1$, \ldots, $k$.

Iteratively applying self-similarity we have
\[
g \cdot (u_1 u_2 \cdots u_k w) = \cdots = y_1 \cdots y_{k-1} (h_\ell^{k-1} \cdot (u_k \cdot w)) =  y_1 \cdots y_k (h_j^k \cdot w) = v(h \cdot w),
\]
for $v = y_1 \cdots y_k \in \mc{F}^k$ and $h \in N$, by \eqref{eq: contracting}. This shows, at least in the above expression, that we may replace $h_j^k$ with $h \in N$. In fact, again by repeated application of self-similarity and with more careful consideration of the domains, we have
\[
\dom g \cap u \mc{F} = \bigsqcup_\ell u (\dom h_\ell^k),
\]
where the above is a disjoint union and $g \cdot (uw) = v (h_\ell^k \cdot w)$ for all $w \in \dom h_\ell^k$. Thus, for all $w \in \dom h_\ell^k$, we have $h_\ell^k \cdot w = h \cdot w$ for some $h \in N$.

Thus, let $\NN$ be the union of $N$ and all elements $h_i^j$, for $j < k(g)$, generated by $k(g)-1$ applications of the self-similar rule for each $g \in N$. Then \eqref{eq: semi-nucleus} holds for $g \in N$ using some $h = h_i^1 \in \NN$, by construction. Similarly, for $j < k(g)-1$, each $h_i^j \in \NN$ satisfies \eqref{eq: semi-nucleus} with $h = h_\ell^{j+1} \in \NN$ for some $\ell$. Finally, for $j = k(g)-1$, \eqref{eq: semi-nucleus} is satisfied for each $h_\ell^{k(g)-1} \in \NN$ using some $h \in N \subseteq \NN$, as above, so that $\NN$ is a semi-nucleus.
\end{proof}

\begin{remark} \label{rem: disconnected patches}
For the tiling semigroup, we could define a larger inverse semigroup which does not demand that patches are connected. Then for every (non-zero) $g \in \mc{T}$ we have $g \preceq z$ for some $z = [b,P,a]$ with $P$ a two-tile patch containing $a$ and $b$ (or a $1$-tile patch, if $a=b$). We will sometimes make temporary use of such partial translations not in $\mc{T}$, such as in the proof below.
\end{remark}

\begin{proposition} \label{prop: tiling semigroup contractive}
Let $(\mc{T},\mc{F})$ be the self-similar tiling semigroup of an aperiodic substitution tiling. Then for each $g \in \mc{T}$ there is some $k = k(g) \in \N$ so that, for all $uw \in \dom(g)$ with $u \in \mc{F}^k$, we have that
\[
g \cdot (uw) = v (h \cdot w)
\]
for some $v \in \mc{F}^k$ and $h = [b,P,a]$ for which $a=b$ or $a$ and $b$ are adjacent. Hence, $\mc{T}$ defines a contractive action on $\mc{F}$. We may choose $\NN$ to be a semi-nucleus consisting of all doubly-pointed {\bf star patches} $[b,P,a]$, that is, with $P$ a patch of tiles all sharing a common point.
\end{proposition}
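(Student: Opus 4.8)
The plan is to exploit the geometric content of the self-similarity theorem: iterating the rule \eqref{eq: self-sim} replaces $g=[b,P,a]$ by the partial translation between the successive supertiles carrying the in-tile $a$ and the out-tile $b$, and the expansion factor $\lambda>1$ forces the relative displacement of these supertiles to shrink. First I would make this quantitative. Let $D_{\max}$ and $D_{\min}$ denote the largest and smallest prototile diameters. After $n$ applications of self-similarity the restriction of $g$ is (after adjusting its domain) an element $h_n=[b_n,P_n,a_n]$ whose in/out tiles $a_n,b_n$ are the $n$-supertiles of $a,b$, regarded as ordinary tiles of $\sub^{-n}\tau(w)$. Since $a_{n-1}$ lies in the support of the $1$-supertile $\sub(a_n)$, whose puncture sits at $\lambda\,x(a_n)$, comparing punctures gives $x(a_{n-1})=\lambda\,x(a_n)+\ep_a$ with $|\ep_a|\le \lambda D_{\max}$, and similarly for $b$. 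Writing $d_n\coloneqq x(b_n)-x(a_n)$, this yields the recursion $|d_n|\le |d_{n-1}|/\lambda+2D_{\max}$, whose iterates are eventually bounded by a constant $C$ depending only on $\sub$. Bounded displacement together with FLC means that, after replacing $P_n$ by the minimal connected patch containing $a_n$ and $b_n$ (permissible since we may pass to any $h\succeq h_n$, as noted after Definition \ref{def: self-similar}), the element $h_n$ lies in a fixed finite subset of $\mc{T}$. This already verifies the contracting condition of Definition \ref{def: contracting}.

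For the sharper claim that $\NN$ may be taken to be the star patches, the key geometric input is the following: if two tiles share a common point $p$, then so do the supertiles containing them, because the support of a tile is contained in that of its supertile. Hence the restriction of a star-patch element is again a star-patch element --- if all tiles of $P$ share a point $p$, their supertiles all contain $p$ --- so $\NN$ is closed under a single application of self-similarity. This is exactly the semi-nucleus property \eqref{eq: semi-nucleus}, so that once a restriction lands in $\NN$ it remains there.

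It therefore remains to show that every $g$ restricts into $\NN$ after finitely many steps, i.e.\ that the in/out tiles $a_n,b_n$ eventually share a common point. Here, rather than tracking the bound above, I would argue directly from expansiveness. For fixed $g$ the original punctures $x(a),x(b)$ are a fixed distance $\rho$ apart, while the $k$-supertiles $S_a\ni a$ and $S_b\ni b$ have diameter at least $\lambda^k D_{\min}$. For $k$ large this diameter dwarfs $\rho$, and I claim $S_a$ and $S_b$ must then coincide or meet: the straight segment from $x(a)$ to $x(b)$ has length $\rho$, runs from $S_a$ to $S_b$, and leaves no room for an intervening supertile of diameter $\ge\lambda^k D_{\min}$. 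Deflating, the in/out tiles $a_k,b_k$ of $h_k$ then share a point, so $h_k\in\NN$; combined with the previous paragraph this shows $\NN$ is a semi-nucleus, as in Lemma \ref{lem: semi-nucleus}.

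The step I expect to be the main obstacle is the last one, upgrading ``bounded separation'' to ``shares a common point''. In dimension one it is immediate: a supertile lying strictly between $a_k$ and $b_k$ would occupy a subinterval of length at most $\rho$ while having length at least $\lambda^k D_{\min}$, a contradiction once $\lambda^k D_{\min}>\rho$. In higher dimensions excluding an intervening supertile is more delicate, since a large supertile may meet a short segment while occupying only a small portion of any ball around it, so the naive volume estimate fails. Making the ``no room to wedge'' argument rigorous --- presumably by following the segment between the punctures and invoking recognisability and border-forcing to control how the few supertiles meeting a bounded region fit together as $k$ grows --- is where the genuine work of the proof lies.
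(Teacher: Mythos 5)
Your first two steps are sound and close in spirit to the paper's argument: the displacement recursion $|d_n|\le |d_{n-1}|/\lambda + 2D_{\max}$ does yield eventually bounded separation, which with FLC gives a finite set $N$ and hence the contracting property of Definition~\ref{def: contracting}; and your observation that star patches are closed under restriction (tiles sharing a point have supertiles sharing that point) is exactly how the paper concludes that $\NN$ is a semi-nucleus. But the step you flag as the ``main obstacle'' is a genuine gap, and your proposed route to it does not work: in dimension $\ge 2$ the ``no room to wedge'' claim is unproven (and delicate even for polygonal tiles, let alone tiles with fractal boundaries, where a third supertile can meet the short segment between $x(a)$ and $x(b)$ in an arbitrarily small set near its boundary). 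Since the proposition's actual content is precisely that restrictions eventually land in the star-patch semi-nucleus --- mere contractivity with \emph{some} finite $N$ follows already and would only give an unspecified semi-nucleus via Lemma~\ref{lem: semi-nucleus} --- the proof is incomplete at its crucial point.

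The idea you are missing is to contract the right quantity: not the puncture displacement $d_n$, which only stabilises at a bounded value, but the separation $r_i \coloneqq \inf\{|x-y| \mid x \in \supp(a_i),\, y \in \supp(b_i)\}$ between the supports of the in/out tiles. Because the patch $(a_{i-1},b_{i-1})$ embeds geometrically into $\sub\bigl((a_i,b_i)\bigr)$, one gets the clean inequality $r_i \le \lambda^{-1} r_{i-1}$, so $(r_i)$ decays geometrically rather than merely staying bounded. Now FLC converts decay into exact vanishing by pigeonhole: a pair of tiles at separation at most $r_0$ lies in a ball of bounded radius, so by FLC the value $r_i$ ranges over a \emph{finite} set, and a sequence that is strictly decreasing whenever positive and takes finitely many values must be eventually $0$; moreover $k$ can be chosen depending only on $(a,b)$, hence on $g$. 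Since $r_i=0$ means the compact supports intersect, $h_i$ is (extendable to) a doubly pointed patch of meeting tiles for $i>k$, i.e.\ a star patch, and your closure argument finishes the proof. This FLC pigeonhole entirely bypasses the geometry of how supertiles fit along the segment, which is why the paper never needs to rule out an intervening supertile.
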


\begin{proof}
Without loss of generality, we can take $g=[b,P,a]$ where $P$ is a two-tile patch containing only tiles $a$ and $b$ (but with $P$ possibly disconnected, see Remark \ref{rem: disconnected patches}). Let $g=h_0$ and consider any element $w \in \dom(g)$ along with a sequence $\{h_i\} \subset \mc{T}$ arising from recursively applying the self-similar relation \eqref{eq: self-sim} to $g \cdot w$. We may choose each $h_i=[b_i,P_i,a_i]$ using a one or (possibly disconnected) two-tile patch.

Consider the sequence of tile pairs $\{(a_i,b_i)\}$ coming from the doubly pointed patches $h_i=[b_i,P_i,a_i]$. Let $r_i \coloneqq \inf\{|x-y| \mid x \in \supp(a_i) \text{ and } y \in \supp(b_i)\}$ be the distance between tiles $a_i$ and $b_i$; that is, the infimum of distances between points of $a_i$ and $b_i$. It follows from the proof of Theorem \ref{tiling_SS} that the supertile extensions (of $a_{i-1}$ into $a_i$ and $b_{i-1}$ into $b_i$) arising from the self-similar relation \eqref{eq: self-sim} geometrically embed the two-tile patch $(a_{i-1},b_{i-1})$ as a subpatch of the substitution of $(a_i,b_i)$. It follows immediately that $r_i \leq \lambda^{-1} r_{i-1}$ for all $i \in \N$ since pairs of supertile extensions within a patch uniformly scale tiles by $\lambda^{-1}$ between the range and source, and the support of the source covers the support of the range.

We claim the sequence $(r_i)$ is eventually zero. If not, then this would provide an infinite sequence of two-tile patches with arbitrarily small distance apart. But it follows from FLC that $r_i$ can only take finitely many values less than $r_0$, so this cannot happen. Hence, we have that $r_i = 0$ for $i > k$ where, by FLC, $k$ can be taken to only depend on $(a,b)$ and thus on $g$. Since $r_i = 0$ if and only if $a_i$ and $b_i$ are equal or adjacent, it follows that $h_i$ may be given by a doubly pointed one or two-tile patch of intersecting tiles for $i > k$.

Let $\NN$ be the set of doubly pointed star-patches. By the above, for each $g \in \mc{T}$ there is some $k \in \N$ satisfying \eqref{eq: contracting} with $h \in \NN$. Since the source and range tiles of the $h_i$ intersect also for $i > k$, further restrictions may be taken in $\NN$, which is thus a semi-nucleus for $\mc{T}$.
\end{proof}

\begin{remark}
For self-similar groups one defines \emph{the} nucleus of a contracting group $G$ to be the minimal $\NN$ so that all elements eventually restrict to $\NN$. In \cite{Nek_SSIS}, a notion of contractivity and (minimal, uniquely defined) nucleus is given in the case of self-similar semigroups. However, in this setup self-similar semigroups are treated via automatons which are $\omega$-deterministic, which amounts to declaring fixed restrictions of partial bijections in \eqref{eq: self-sim} as part of the structure. In our setup we have allowed this to remain flexible, which is an alternative approach which we feel may be of further interest. Indeed, it was beneficial in the proof above that it was not necessary to manage the shapes of patches under restriction down to the semi-nucleus. It is also clear that, in this setting, it may be impossible and unnatural to have a unique and minimal semi-nucleus $\NN$. For example, for a cellular $2$-dimensional tiling, if we define connected patches via meeting tiles merely being adjacent, then we only need $1$ and $2$-tile doubly pointed patches in $\NN$. If we instead define tiles to be meeting when they meet over a shared edge, then star patches $[b,P,a]$ with $a$ and $b$ meeting at a shared vertex (but not over an edge) can be removed, and replaced with star patches $[b,P',a]$, with $P' \subset P$ connected, for which there is some degree of arbitrary choice.
\end{remark}

In the case of a $d$-dimensional cellular tiling, it is not hard to see that $\mc{T}$ is generated by idempotents (which may be identified with $\mc{P}$) together with $\mc{P}_2$, defined as the finite set of elements $[b,P,a]$ for $a \neq b$ and $P$ a two-tile patch consisting of $a$ and $b$ meeting over a particular shared $(d-1)$-dimensional face. Idempotents restrict to idempotents, and elements of $\mc{P}_2$ restrict to elements of $\mc{P} \cup \mc{P}_2$ (after possibly extending domains). So the action of $\mc{T}$ on $\mc{F}$ may be completely described by the action of the finite set $\mc{P}_2$ on strings of sufficiently large length, together with how they restrict to elements of $\mc{P} \cup \mc{P}_2$. However, the semi-nucleus still requires more elements for tilings of dimension greater than one, since the restriction of a `diagonally adjacent doubly pointed patch' can remain as such after arbitrarily many restrictions.

\section{The limit space}\label{sec: limit space}

Let $G$ be a finite graph with associated topological Markov shift $\mc{F}$ (the right-infinite, left-pointing paths). We define
\[
\mc{F}^- \coloneqq \{\cdots e_{-3}e_{-2}e_{-1} \mid r(e_i) = s(e_{i-1})\};
\]
that is, the space of left-infinite, left-pointing paths, which is equipped with the product topology. The following is a natural adaptation of the asymptotic equivalence relation from the case of self-similar groups to semigroups:

\begin{definition}\label{def:ae}
Two elements $x = \cdots e_{-3}e_{-2}e_{-1}$ and $y = \cdots f_{-3}f_{-2}f_{-1} \in \mc{F}^-$ are called {\bf asymptotically equivalent} with respect to the action of the semigroup $G$ if there is a sequence $(g_n)$ of $G$, with $\{g_n\} \subseteq G$ finite, and some $w \in \mc{F}$ so that for each $n \in \N$ the element
\begin{equation} \label{eq: ae relation}
g_n \cdot (e_{-n} \cdots e_{-3}e_{-2}e_{-1}w) \in \mc{F}
\end{equation}
has initial string of $n$ terms given by $f_{-n} \cdots f_{-3}f_{-2}f_{-1} \in \mc{F}^n$. In this case we write $x \aeq y$. We define the {\bf asymptotic equivalence relation} $\sim$ on $\mc{F}^{-}$ to be the equivalence relation generated by $\aeq$.
\end{definition}

The main difference between the above definition and the case of self-similar groups is that we need to append the infinite word $w$ to the right of the finite string $e_{-n} \cdots e_{-1}$ so that $g_n$ may be unambiguously applied to it. However, by self-similarity, it is in fact only necessary to append a finite string of sufficiently large length.

In the lemma below, and henceforth, we will always assume that for each $x \in \mc{F}$ there is some $g_x \in G$ with $x \in \dom(g_x)$ and $\dom(g_x)$ open.

\begin{lemma} \label{lem: aeq ref sym}
Let $G$ be a self-similar inverse semigroup acting on the Markov chain $\mc{F}$. Then $\aeq$ is reflexive. Suppose that $G$ is contractive and $e \aeq f$. Then we may take each $g_n \in \NN$ in \eqref{eq: ae relation} for $\NN$ some semi-nucleus, and there exists $w \in \mc{F}$ and $h \in \NN$, not depending on $n$, such that
\begin{equation} \label{eq: aeq alt}
g_n \cdot (e_{-n} \cdots e_{-3}e_{-2}e_{-1}w) = f_{-n} \cdots f_{-3}f_{-2}f_{-1} (h \cdot w)
\end{equation}
In particular, $\aeq$ is symmetric.
\end{lemma}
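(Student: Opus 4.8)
The plan is to treat the three assertions—reflexivity, the semi-nucleus normal form, and symmetry—separately, with symmetry dropping out of the normal form essentially for free.

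For reflexivity I would fix $x = \cdots e_{-2}e_{-1}$, choose any $w \in \mc{F}$ with $e_{-n}\cdots e_{-1}w \in \mc{F}$ for all $n$ (such $w$ exists since the graph has no sinks), and exploit that idempotents act as partial identities. By the standing assumption the domains of the partial identities $g^*g$, for $g \in G$, form an open cover of the compact space $\mc{F}$, so finitely many $\varepsilon_1,\dots,\varepsilon_m$ already cover it. For each $n$ I pick some $\varepsilon_{j(n)}$ whose clopen domain contains $e_{-n}\cdots e_{-1}w$; being a partial identity it fixes this point, so the first $n$ terms of its output are $e_{-n}\cdots e_{-1}$. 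Setting $g_n := \varepsilon_{j(n)}$ gives a finite family witnessing $x \aeq x$, with no contractivity needed.

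For the contractive case I would first note that iterating \eqref{eq: self-sim} is length-preserving: each step consumes one input letter and emits one output letter while passing a restricted element to the remaining tail. Hence reading the whole block $e_{-n}\cdots e_{-1}$ gives $g_n \cdot (e_{-n}\cdots e_{-1}w) = f_{-n}\cdots f_{-1}(h^{(n)}\cdot w)$, where the emitted prefix is forced to equal $f_{-n}\cdots f_{-1}$ by \eqref{eq: ae relation}. Since the original $\{g_n\}$ is finite I set $K := \max_n k(g_n)$ from Definition \ref{def: contracting}; after $K$ reads the restricted element lies in the contracting set, and the semi-nucleus property of $\NN$ (Lemma \ref{lem: semi-nucleus}) keeps every subsequent restriction in $\NN$. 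Tracking these, for $i \le n-K$ the element $u_i^{(n)} \in \NN$ poised to read the suffix $e_{-i}\cdots e_{-1}w$ is well defined, with $u_0^{(n)} = h^{(n)} \in \NN$ and each transition $u_i^{(n)} \to u_{i-1}^{(n)}$ emitting $f_{-i}$ via \eqref{eq: semi-nucleus}.

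The main obstacle is upgrading the $n$-dependent tail states $h^{(n)}$ to a single $h$, and this is where I would run a diagonal argument over the \emph{finite} set $\NN$. Iterated pigeonholing along a nested sequence of subsequences produces states $(u_i)_{i\ge 0} \subseteq \NN$ and indices $n_1 < n_2 < \cdots$ with $u_i^{(n_j)} = u_i$ whenever $i \le j$. Because the word $e_{-(i-1)}\cdots e_{-1}w$ does not depend on $n$, the transition relation $u_i \cdot (e_{-i}\cdots e_{-1}w) = f_{-i}(u_{i-1}\cdot(e_{-(i-1)}\cdots e_{-1}w))$ holds outright, inherited from any deep $n_j$. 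Setting $h := u_0 \in \NN$, $g_n := u_n \in \NN$ and keeping the same $w$, telescoping these transitions yields exactly \eqref{eq: aeq alt} with $\{g_n\} \subseteq \NN$ finite.

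Symmetry is then immediate, which is precisely why the normal form is bundled with it. Applying the inverse $g_n^*$ to \eqref{eq: aeq alt} recovers $g_n^* \cdot \big(f_{-n}\cdots f_{-1}(h\cdot w)\big) = e_{-n}\cdots e_{-1}w$, valid since $f_{-n}\cdots f_{-1}(h\cdot w) \in \ran(g_n) = \dom(g_n^*)$. Thus the finite family $(g_n^*)$ with basepoint $w' := h\cdot w$ witnesses $f \aeq e$, as its output has first $n$ terms $e_{-n}\cdots e_{-1}$. The decisive feature is that \eqref{eq: aeq alt} reconstructs the original input with the \emph{same} $w$, so inverting needs no further bookkeeping of tails.
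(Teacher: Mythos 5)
Your proposal is correct and follows essentially the same route as the paper's proof: reflexivity via a finite cover of $\mc{F}$ by domains of idempotents $g^{-1}g$, reduction of the $g_n$ into a semi-nucleus $\NN$ using $K = \max_n k(g_n)$ and the closure of $\NN$ under single restrictions, a Cantor diagonalisation over the finite set $\NN$ to stabilise the restriction states and the tail $h \cdot w$ independently of $n$, and symmetry by applying $g_n^*$ to \eqref{eq: aeq alt}. Your explicit bookkeeping with the states $u_i^{(n)}$ and nested subsequences is simply a more detailed rendering of the paper's diagonal argument, and correctly identifies that the constant tail is what makes the inversion step work.
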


\begin{proof}
By compactness, one can choose a finite number of $x \in \mc{F}$ with the union of $\dom(g_x)$ covering $\mc{F}$. We have idempotents $h_i = g_x^{-1} g_x$, $i=1$, \ldots $k$, which still have domains $\dom(h_i) = \dom(g_x)$ covering $\mc{F}$. Given $e = \cdots e_{-2} e_{-1} \in \mc{F}^-$ take any $w \in \ran({e_{-1}})$. Then we may take each $g_n$ to be some $h_i$, with $e_{-n} \cdots e_{-1} w \in \dom(h_i)$. Since each $h_i$ is an idempotent, we have that $g_n (e_{-n} \cdots e_{-1} w) = e_{-n} \cdots e_{-1} w$, so $e \aeq e$.

Suppose now that $G$ is contractive. By Lemma \ref{lem: semi-nucleus} we may choose a semi-nucleus $\NN$ for $G$. Given $g_n$, we have some $k(g_n) \in \N$ as required from Definition \ref{def: contracting}. By finiteness of $\{g_n\}$, we may take $K = \max_n k(g_n) < \infty$. Then
\[
g_{n+K} \cdot (e_{-(n+K)} \cdots \cdots e_{-1} w) = f_{-(n+K)} \cdots f_{-(n+1)} h \cdot (e_{-n} \cdots e_{-1} w) = f_{-(n+K)} \cdots f_{-1} w',
\]
for some $w' \in \mc{F}$ and $h \in \NN$, so we may suppose without loss of generality that $g_n = h \in \NN$. Since this applies for each $n \in \N$, and $\NN$ is finite, we see that we may take $\{g_n\}$ as a sequence in $\NN$.

By repeated application of \eqref{eq: semi-nucleus}, for each $n \in \N$ we may write
\begin{equation} \label{eq: aeq sym}
g_n \cdot (e_{-n} \cdots e_{-3}e_{-2}e_{-1}w) = f_{-n} \cdots f_{-3}f_{-2} f_{-1} (h \cdot w),
\end{equation}
where $h \in \NN$. By finiteness of $\NN$, some $h \in \NN$ as above occurs for infinitely many $n$. For each $n$ in this subsequence, we similarly have
\[
g_n \cdot (e_{-n} \cdots e_{-3}e_{-2}e_{-1}w) = f_{-n} \cdots f_{-3}f_{-2} h_1 \cdot (e_{-1} w)
\]
for some $h_1 \in \NN$. Some such $h_1$ occurs infinitely often, and we may take $g_1 = h_1$. Repeating for each $n \in \N$, the resulting Cantor diagonalisation argument implies that we may take each $g_n$ so that application of \eqref{eq: semi-nucleus} restricts each $g_n$ to $g_{n-1}$, with the final restriction to the right-infinite tail $h \cdot w$ not depending on $n$, as required.

Finally, applying $g_n^{-1}$ to both sides of \eqref{eq: aeq alt}, we see that $f \aeq e$.
\end{proof}

Whilst the above shows that $\aeq$ is reflexive and symmetric in the contractive case, it need not be transitive, as we will see for the tiling semigroup. So $\sim$ is the transitive closure of $\aeq$.

\begin{remark}
Lemma \ref{lem: aeq ref sym} implies that, in the contractive case, we may equivalently define $\aeq$ by demanding that the right-infinite tail $w'$ of \eqref{eq: aeq alt} remains constant in $n$, and that each $g_n \in \NN$.
\end{remark}

\begin{definition} \label{def: limit space}
The {\bf limit space} $\limsp$ of a self-similar semigroup action is defined as the quotient space $\mc{F}^- / \sim$. The shift map $\sigma \colon \mc{F}^{-} \to \mc{F}^{-}$, given by $\cdots e_{-3} e_{-2} e_{-1} \mapsto \cdots e_{-4} e_{-3} e_{-2}$ induces a map $\sigma \colon \limsp \to \limsp$. We denote its inverse limit by
\begin{equation}\label{omega_inv_limit}
\Omega \coloneqq \varprojlim (\limsp \xleftarrow{\sigma} \limsp \xleftarrow{\sigma} \limsp \xleftarrow{\sigma} \cdots).
\end{equation}
\end{definition}

We now return to the case of the tiling semigroup $\mc{T}$ acting on $\mc{F} \cong \Omp$. We construct a map
\begin{equation}\label{alpha_map}
\alpha \colon \mc{F}^- \hspace{-0.2cm} \longrightarrow Y \coloneqq \bigsqcup_{p \in \mc{P}} \supp(P),
\end{equation}
where the range of the map is the disjoint union of (supports of) prototiles. Let us recall the following elementary lemma.

\begin{lemma}\label{lem_compact_inclusion}
Let $\cdots \subset S_{-3} \subset S_{-2} \subset S_{-1} $ be a sequence of nested non-empty compact subsets of $\R^d$ such that the corresponding diameters $d_i \coloneqq sup_{x_1, x_2 \in S_i} |x_1-x_2| \to 0$ as $i \to -\infty$. Then $\cap_{i=-1}^{-\infty} S_i$  is a single point in $\R^d$.
\end{lemma}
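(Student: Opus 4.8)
The plan is to separate the two assertions hidden in the statement: first that the intersection $\bigcap_{i=-1}^{-\infty} S_i$ is nonempty, and second that it contains at most one point. Nonemptiness is a direct application of the finite intersection property of compact sets (Cantor's intersection theorem), whereas uniqueness follows immediately from the fact that the diameters vanish. Neither half requires more than a couple of lines, so this really is a matter of assembling standard facts in the right order.

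First I would establish $\bigcap_i S_i \neq \varnothing$. Because the sets are nested, any finite subfamily $S_{i_1}, \ldots, S_{i_m}$ has intersection equal to whichever of them carries the most negative index, and this is nonempty by hypothesis; hence the family $\{S_i\}$ enjoys the finite intersection property. Since each $S_i$ is compact, and in particular each is a closed subset of the compact set $S_{-1}$, the finite intersection property forces the total intersection to be nonempty: otherwise the open complements $\{\,\mathbb{R}^d \setminus S_i\,\}$ would cover $S_{-1}$, and extracting a finite subcover would contradict the finite intersection property.

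Next I would prove that the intersection is a single point. Suppose $x, y \in \bigcap_i S_i$. Then for every index $i$ we have both $x, y \in S_i$, so by the definition of the diameter $|x - y| \le d_i$. Letting $i \to -\infty$ and invoking the hypothesis $d_i \to 0$ yields $|x - y| = 0$, hence $x = y$. Combined with the nonemptiness just established, the intersection consists of exactly one point of $\mathbb{R}^d$.

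There is no genuine obstacle here; the only subtlety worth flagging is that the compactness hypothesis is essential to the nonemptiness step and cannot be dropped, as the example of the nested closed sets $[n,\infty) \subset \mathbb{R}$ (whose intersection is empty despite each being nonempty and closed) shows. The diameter condition alone secures uniqueness but not existence, so both hypotheses pull their weight.
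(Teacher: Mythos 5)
Your proof is correct: nonemptiness via the finite intersection property of nested compacta, uniqueness via the vanishing diameters, which is exactly the standard argument. The paper itself offers no proof of this lemma --- it is explicitly ``recalled'' as elementary --- so there is no alternative route to compare against; yours is the canonical one the authors implicitly invoke. One minor caveat on your closing remark: the example $[n,\infty) \subset \R$ has infinite diameter, so it does not show that compactness is indispensable \emph{in the presence of} the diameter hypothesis; by Cantor's intersection theorem in the complete metric space $\R^d$, closedness together with $d_i \to 0$ already forces the intersection to be a single point (and indeed, once the diameters are finite, a closed set in $\R^d$ is compact by Heine--Borel, so the two formulations essentially coincide here). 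Your example only demonstrates that closedness alone, without the diameter condition, fails to give nonemptiness.
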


We will construct such a nested sequence from elements of $\mc{F}^-$. This is done using successively finer partitions of $Y$ under substitution: Given $e = \cdots e_{-2}e_{-1} \in \mc{F}^-$, we let $S_{-1} \coloneqq \supp(s(e_{-1}))$. For $n > 1$, each supertile extension $e_{-n}$ embeds $\supp(r(e_{-n}))$ into $\supp(s(e_{-n}))$ as subtiles of scale $\lambda^{-1}$ of the original size. Thus, letting $S_{-n} \coloneqq \supp(r(e_{-n}))$ be the corresponding subset of $S_{-(n-1)}$, we get a nested sequence of subtile inclusions $\cdots \subset S_{-2} \subset S_{-1}$. By Lemma \ref{lem_compact_inclusion} their intersection is some point $x_{-\infty} \in S_{-n} \subset Y$ for each $n$, and we define a continuous map $\alpha \colon \mc{F}^- \hspace{-0.2cm} \longrightarrow Y$ by $\alpha(e) \coloneqq x_{-\infty}$.

\begin{lemma} \label{thm: aeq}
Suppose $(\mc{T},\mc{F})$ is a self-similar inverse semigroup associated with a recognisable substitution. We have that $e \aeq f$ if and only if there is some tiling $T \in \Omp$, and tiles $t=p+x $ and $t'=q+y$ in $T$ with $p$, $q \in \PP$ and $x$, $y \in \R^d$ such that $\alpha(e) \in \supp(p)$, $\alpha(f) \in \supp(q)$ and $\alpha(e)+x=\alpha(f)+y$. That is, $\alpha(e)$ and $\alpha(f)$ are identical points of a prototile, or points on prototile boundaries which can coincide in adjacent tiles.
\end{lemma}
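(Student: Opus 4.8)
The plan is to translate the combinatorial relation $e \aeq f$ into a statement about the hierarchical geometry of the tiling, via the homeomorphism $\tau$ of \eqref{eq: strings <-> transversal}. Given $e = \cdots e_{-2}e_{-1} \in \mc{F}^-$ and a right-infinite tail $w \in \mc{F}$ for which $e_{-n}\cdots e_{-1}w \in \mc{F}$, set $T_n \coloneqq \tau(e_{-n}\cdots e_{-1}w) \in \Omp$. Its origin tile is $r(e_{-n})$, and the initial string $e_{-n}\cdots e_{-1}$ witnesses that this tile is the depth-$n$ subtile, addressed by $e$, of a level-$n$ supertile of type $s(e_{-1})$. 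Since $\alpha(e) \in \supp(s(e_{-1}))$ by construction, the image of $\alpha(e)$ inside this supertile lies in the origin tile $r(e_{-n})$, hence within one prototile diameter $R_0$ of the origin of $T_n$. The relation $e \aeq f$ asks for $g_n \in \mc{T}$ so that $g_n \cdot T_n$ has initial string $f_{-n}\cdots f_{-1}$, that is, partial translations carrying the origin tile of $T_n$ onto the $f$-addressed depth-$n$ subtile $r(f_{-n})$ of a type-$s(f_{-1})$ supertile. I will show both directions reduce to comparing the positions of $\alpha(e)$ and $\alpha(f)$ after deflating by $\lambda^{-n}$.

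For ($\Leftarrow$), suppose we are given $T$ and tiles $t = p+x$, $t' = q+y$ with $z \coloneqq \alpha(e)+x = \alpha(f)+y$. I would take $w$ to record the tower of supertiles of $T$ containing $t$, so that $T_n = \sub^n(T) - c^e_n$, where $c^e_n$ is the puncture of the $e$-addressed subtile $r(e_{-n})$ of $\sub^n(t)$; border forcing guarantees this identification. Because $r(e_{-n})$ contains $\lambda^n z$ we get $|c^e_n - \lambda^n z| \le R_0$, and the analogous subtile $r(f_{-n})$ of $\sub^n(t')$ has puncture $c^f_n$ with $|c^f_n - \lambda^n z| \le R_0$, so $|c^e_n - c^f_n| \le 2R_0$. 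Setting $g_n \coloneqq [r(f_{-n}), P_n, r(e_{-n})]$ for a connected patch $P_n$ joining the two tiles, one checks $g_n \cdot T_n = \sub^n(T) - c^f_n = \tau(f_{-n}\cdots f_{-1}w')$, which has the desired initial string. Finiteness of $\{g_n\}$, required by Definition \ref{def:ae}, then follows from FLC since the $P_n$ have uniformly bounded diameter.

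For the harder direction ($\Rightarrow$), I would first apply Lemma \ref{lem: aeq ref sym} together with Proposition \ref{prop: tiling semigroup contractive} to arrange each $g_n$ in the finite semi-nucleus $\NN$ of doubly pointed star patches. Then $g_n \cdot T_n = T_n - d_n$ with origin tile $r(f_{-n})$, and since $g_n$ is a star patch the displacement obeys $|d_n| \le D$ for a fixed $D$. Using surjectivity of $\sub$ on $\Omega_\sub$, choose $U_n \in \Omega_\sub$ with $\sub^n(U_n) = T_n$; the two level-$n$ supertiles of $T_n$ containing $r(e_{-n})$ and $r(f_{-n})$ are then $\sub^n$ of genuine tiles $p + x_n$ and $q + y_n$ of $U_n$, where $p = s(e_{-1})$ and $q = s(f_{-1})$. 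Applying $\sub^n$ to the point $\alpha(e) + x_n$ of $U_n$ and using the estimates above yields $|\alpha(e) + x_n| \le \lambda^{-n}R_0$ and $|\alpha(f) + y_n| \le \lambda^{-n}(R_0 + D)$, so both tend to $0$ and hence $y_n - x_n \to \alpha(e) - \alpha(f)$. By FLC there are finitely many two-tile patches up to translation, so along a subsequence $y_n - x_n$ is constant and therefore exactly equals $\alpha(e) - \alpha(f)$; taking $T \coloneqq U_{n_0}$ (translated to lie in $\Omp$) with $x \coloneqq x_{n_0}$, $y \coloneqq y_{n_0}$ for such an $n_0$ gives tiles realizing $\alpha(e) + x = \alpha(f) + y$.

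The step I expect to be the main obstacle is the scale bookkeeping in ($\Rightarrow$): reconciling the unit-scale bound $|d_n| \le D$ coming from finiteness of $\NN$ with the vanishing deflated displacement $\lambda^{-n}d_n \to 0$ of the $\alpha$-points, and confirming that the deflated pair of level-$n$ supertiles genuinely converges (along an FLC subsequence) to a legitimate two-tile patch of $\Omega_\sub$ rather than merely an abstract limit point. Handling the case where $\alpha(e)$ or $\alpha(f)$ lies on a prototile boundary, where several supertiles meet, is exactly where border forcing and the construction of $\alpha$ through Lemma \ref{lem_compact_inclusion} must be invoked with care.
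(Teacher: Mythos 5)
Your proposal is correct and takes essentially the same route as the paper: the backward direction constructs the $g_n$ from the point shared by the $e$- and $f$-addressed subtiles at every substitution level (with FLC giving finiteness of $\{g_n\}$), and the forward direction invokes Lemma \ref{lem: aeq ref sym} together with the star-patch semi-nucleus of Proposition \ref{prop: tiling semigroup contractive}. The only cosmetic difference is your finish in the forward direction: where the paper reads the tiling $T=\tau(w)$ and the fixed pair of adjacent origin tiles directly off the constant tail $h \cdot w$ supplied by Lemma \ref{lem: aeq ref sym}, you re-derive them via the deflation estimates $|\alpha(e)+x_n| \le \lambda^{-n}R_0$, $|\alpha(f)+y_n| \le \lambda^{-n}(R_0+D)$ and an FLC pigeonhole forcing $y_n - x_n$ to equal $\alpha(e)-\alpha(f)$ exactly at a finite stage, which is an equivalent (slightly more computational) way of extracting the same conclusion.
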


\begin{proof}
Suppose that such a tiling $T$ exists with $\alpha(e)+x=\alpha(f)+y$ in the supports of $t$ and $t'$. Using the homeomorphism $\tau:\mc{F} \to \Omp$ from \eqref{eq: strings <-> transversal}, let $w_e = \tau^{-1}(T-x(t))$ and $w_f = \tau^{-1}(T-x(t'))$. For each $n \in \N$, consider the tilings $E_n=\tau(e_{-n} \cdots e_{-2}e_{-1}w_e)$ and $F_n=\tau(f_{-n} \cdots f_{-2} f_{-1}w_f)$, respectively. Notice that the sequences of tilings $(E_n)$ and $(F_n)$ are given by successive substitution. Moreover, since $\alpha(e)$ and $\alpha(f)$ correspond to a shared point, we have that the tilings $E_n$ and $F_n$ are equal, up to translating from the origin tile of $E_n$ to the adjacent origin tile of $F_n$. It follows that we may choose semigroup elements $g_n \in \mc{T}$ corresponding to translations between adjacent tiles and so that $g_n \cdot \tau^{-1}(E_n) = \tau^{-1}(F_n)$. This shows that \eqref{eq: aeq alt} is satisfied with $h = [q,P_{pq},p]$ where $P_{pq}$ may be taken as a star-patch with $p$, $q$ meeting analogously to $t$ and $t'$. By FLC, there are only finitely many such patches, so $e \aeq f$.

Conversely, suppose that $e \aeq f$, and take $w$, $(g_n)$ and $h$ as in \eqref{eq: aeq alt}. We have that
\begin{equation} \label{eq: limit space proof}
g_n\cdot (e_{-n} \cdots e_{-3}e_{-2}e_{-1}w) = f_{-n} \cdots f_{-1}(h \cdot w),
\end{equation}
where each $g_n \in \NN$ and $h \in \NN$. Hence, we have tilings $T = \tau(w)$ and $T' = \tau(h \cdot w)$ which are equal up to a translation between adjacent origin tiles $t = t_0 \in T$ and $t' = t'_0 \in T'$. Moreover, for each level $n$ of substitution, the tilings $E_n = \tau(e_{-n} \cdots e_{-2}e_{-1}w)$ and $F_n = \tau(f_{-n} \cdots f_{-2} f_{-1}w)$ remain equal up to translation between adjacent origin tiles $t_n \in \sub(t_{n-1})$ and $t'_n \in \sub(t'_{n-1})$. So we have $T-x(t')=T'$ and, letting $t=p+x$ and $t'=q+y$ for $p$, $q \in \PP$, by definition of $\alpha$ we have that $\alpha(e)+x=\alpha(f)+y$, as required.
\end{proof}

\begin{theorem} \label{thm: limit spaces of sub tilings}
Suppose $(\mc{T},\mc{F})$ is a self-similar inverse semigroup associated with a recognisable substitution $\sub$. The limit space $\limsp$ is homeomorphic to the Anderson--Putnam complex of the substitution,  and the inverse limit $\Omega$ in \eqref{omega_inv_limit} is conjugate to the continuous hull $\Omega_\sub$.
\end{theorem}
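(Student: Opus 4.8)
The plan is to show that the map $\alpha$ of \eqref{alpha_map} descends to the desired homeomorphism, using Lemma \ref{thm: aeq} to match asymptotic equivalence with the gluings defining the Anderson--Putnam complex, and then to identify the induced shift with the substitution self-map, so that the inverse limits agree by the theorem of Anderson and Putnam \cite{AP}. First I would record the target: the AP complex is the quotient $\Gamma \coloneqq Y/\!\approx$, where $\approx$ is the equivalence relation on $Y$ generated by the gluing relation $\approx_0$ that identifies $z \in \supp(p)$ with $z' \in \supp(q)$ whenever some $T \in \Omp$ contains meeting tiles $p+x$ and $q+y$ with $z+x = z'+y$. Lemma \ref{thm: aeq} says exactly that $e \aeq f$ if and only if $\alpha(e) \approx_0 \alpha(f)$; that is, $\approx_0$ is the image of $\aeq$ under $\alpha$. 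Since $\sim$ and $\approx$ are the equivalence relations generated by $\aeq$ and $\approx_0$ respectively, the two transitive closures correspond under $\alpha$. Hence the continuous composite $\mc{F}^- \xrightarrow{\alpha} Y \to \Gamma$ is constant on $\sim$-classes and descends to a continuous map $\bar\alpha \colon \limsp \to \Gamma$ (continuity being automatic for the quotient topology on $\limsp$).

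Next I would check $\bar\alpha$ is a bijection. For surjectivity it suffices that $\alpha$ is onto $Y$: given $z \in \supp(p)$, iterated substitution subdivides $\supp(p)$ into subtiles of diameter $\lambda^{-n}\,\mathrm{diam}\,\supp(p) \to 0$, so $z$ lies in a nested sequence of such subtiles whose intersection, by Lemma \ref{lem_compact_inclusion}, is $\{z\}$; recording the corresponding supertile extensions produces $e \in \mc{F}^-$ with $\alpha(e) = z$. For injectivity, suppose $\bar\alpha[e] = \bar\alpha[f]$, i.e.\ $\alpha(e) \approx \alpha(f)$, and write a finite chain $\alpha(e) = z_0 \approx_0 z_1 \approx_0 \cdots \approx_0 z_m = \alpha(f)$. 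Choosing $e^0 \coloneqq e$, $e^m \coloneqq f$ and, by surjectivity, intermediate $e^j \in \mc{F}^-$ with $\alpha(e^j) = z_j$, Lemma \ref{thm: aeq} gives $e^j \aeq e^{j+1}$ for each $j$, whence $e \sim f$. Thus $\bar\alpha$ is a continuous bijection. As $\mc{F}^-$ is a closed subset of a product of finite sets it is compact, so its quotient $\limsp$ is compact; since $\Gamma$ is Hausdorff, $\bar\alpha$ is a homeomorphism.

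Finally I would treat the dynamics and the inverse limit. Let $\gamma \colon \Gamma \to \Gamma$ be the substitution self-map, sending $z \in \supp(p)$ to the point of the subtile of $\sub(p)$ containing the inflated point $\lambda z$, translated back to prototile position. Comparing the definition of $\alpha$ with the shift $\sigma$ on $\mc{F}^-$, which deletes the outermost extension $e_{-1}$ and thereby discards the coarsest level of the nested subtile refinement, one verifies directly that $\gamma(\alpha(e)) = \alpha(\sigma e)$ for all $e$, so that $\bar\alpha \circ \sigma = \gamma \circ \bar\alpha$. Consequently $\bar\alpha$ induces a homeomorphism of inverse limits
\[
\Omega = \varprojlim(\limsp \xleftarrow{\sigma} \limsp \xleftarrow{\sigma} \cdots) \;\cong\; \varprojlim(\Gamma \xleftarrow{\gamma} \Gamma \xleftarrow{\gamma} \cdots),
\]
intertwining the natural shift homeomorphisms. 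Since $\sub$ forces the border, the Anderson--Putnam theorem \cite{AP} identifies the right-hand inverse limit, with its shift, conjugately with $(\Omega_\sub, \sub)$; hence $\Omega$ is conjugate to the continuous hull, as claimed.

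I expect the main obstacle to be the conjugacy $\gamma \circ \alpha = \alpha \circ \sigma$. The homeomorphism $\limsp \cong \Gamma$ is essentially handed to us by Lemma \ref{thm: aeq}, but matching a single deletion in the address $\cdots e_{-2}e_{-1}$ with one application of inflate-and-locate on $\Gamma$ requires keeping the scaling conventions and the direction of the bonding maps consistent, so that the Anderson--Putnam description of $\Omega_\sub$ as an inverse limit under substitution can be invoked verbatim.
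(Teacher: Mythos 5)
Your proposal is correct and follows essentially the same route as the paper: descend $\alpha$ through the quotients using Lemma \ref{thm: aeq} to match $\aeq$ with the Anderson--Putnam gluing relation, identify the induced shift on $\limsp$ with the substitution self-map, and invoke \cite[Theorem 4.3]{AP} for the inverse limit. The only difference is that you spell out details the paper compresses into its quotient-map language (surjectivity of $\alpha$ via nested subtiles, the finite-chain argument for injectivity, and the compact-to-Hausdorff step), which is a fair elaboration rather than a different approach.
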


\begin{proof}
Let $x \sim_\mathrm{AP} y$ be the the relation on $Y$ that identifies points of prototiles that coincide in some tiling. The Anderson--Putnam complex $\Gamma_0$ \cite{AP} is defined as the quotient of $Y$ under the transitive closure of $\sim_\mathrm{AP}$. Let us denote the quotient map by $q_\mathrm{AP} \colon Y \to \Gamma_0$. We have that $\alpha \colon \mc{F}^- \to Y$ is also a quotient map, since it is a surjective map between compact Hausdorff spaces. By Lemma \ref{thm: aeq}, we have that $e \aeq f$ in $\mc{F}^-$ if and only if $\alpha(e) \sim_\mathrm{AP} \alpha(f)$. It follows that the quotient map $q \colon \mc{F}^- \to \limsp$ may be identified with composition $\alpha \circ q_\mathrm{AP}$ and hence $\limsp$ is homeomorphic to the Anderson--Putnam complex $\Gamma_0$.

Given $e \in \mc{F}^-$, its shift in the limit space may be identified with $q_\mathrm{AP}(\alpha(\sigma (e)))$. By the definition of $\alpha$, the point $\alpha(\sigma (e)) \in Y$ is given by substituting $\alpha(e)$, considered as a point of a prototile in $Y$. Hence $\sigma \colon \limsp \to \limsp$ agrees with the map induced by substitution on the Anderson--Putnam complex, so $\Omega$ is the continuous hull by \cite[Theorem 4.3]{AP}.
\end{proof}

\begin{remark}
Definition \ref{def: limit space} generalises the notion of the limit space and associated inverse limit (the {\bf limit solenoid}) for self-similar groups. In the case of self-similar semigroups, a notion of the limit solenoid has already been defined without use of the intermediary limit space, as a quotient on the bi-infinite Markov shift $\mc{F}_\Z$ by an equivalence relation similar to the asymptotic equivalence relation above \cite[Definition 3.4]{Nek_SSIS}. In the case considered here, there is a natural map $\beta \colon \mc{F}_\Z \to \Omega_\sub$, defined as follows. Given $w = w_- w_+ \in \mc{F}_\Z$, where $w_- = \cdots e_{-2} e_{-1} \in \mc{F}_-$ and $w_+ = e_0 e_1 \cdots \in \mc{F}$, we define $\beta(w)$ to be the tiling $\tau(w_+)$, translated with the origin over the point corresponding to $\alpha(w_-)$ in the origin tile $r(w_+) = s(w_-)$. This defines a quotient map to the tiling space, and it is easy to see that it identifies points of the Markov shift if and only if they correspond to addresses which are adjacent at all levels, that is, they may be related by a finite sequence of elements $g_n \in \NN$.
\end{remark}

\section{examples} \label{sec: examples}

In this section we study several well-known 1- and 2-dimensional examples of tiling semigroups and their self-similar actions.

\begin{figure}
\begin{center}
\begin{tikzpicture}
\begin{scope}[xshift=0cm,yshift=0cm]
\draw[|-|] (1,2) -- node[above,pos=0.5]{$b$} (3,2);
\draw[|-|] (0,0) -- node[below,pos=0.5]{$a$} (2,0);
\draw[|-|] (2,0) -- node[below,pos=0.5]{$d$} (4,0);
\node[vertex] (vert_l) at (1,0) {};
\node[vertex] (vert_r) at (3,0) {}
	edge [->,>=latex,out=135,in=45,thick] node[above,pos=0.5]{$[a,P_{ad},d]$} (vert_l);
\node[vertex] (vert_u) at (2,2)  {}
	edge [->,>=latex,out=225,in=90,thick] node[left,pos=0.5]{$(a,b)$} (vert_l)
	edge [->,>=latex,out=315,in=90,thick] node[right,pos=0.5]{$(d,b)$} (vert_r);
\node at (0,0) {$\ddag$};
\node at (4,0) {$\ddag$};
\node at (1,2) {$\ddag$};
\node at (3,2) {$\ddag$};
\end{scope}

\begin{scope}[xshift=5cm,yshift=0cm]
\draw[|-|] (1,2) -- node[above,pos=0.5]{$c$} (3,2);
\draw[|-|] (0,0) -- node[below,pos=0.5]{$a$} (2,0);
\draw[|-|] (2,0) -- node[below,pos=0.5]{$d$} (4,0);
\node[vertex] (vert_l) at (1,0) {};
\node[vertex] (vert_r) at (3,0) {}
	edge [->,>=latex,out=135,in=45,thick] node[above,pos=0.5]{$[a,P_{ad},d]$} (vert_l);
\node[vertex] (vert_u) at (2,2)  {}
	edge [->,>=latex,out=225,in=90,thick] node[left,pos=0.5]{$(a,c)$} (vert_l)
	edge [->,>=latex,out=315,in=90,thick] node[right,pos=0.5]{$(d,c)$} (vert_r);
\node at (0,0) {$\ddag$};
\node at (4,0) {$\ddag$};
\node at (1,2) {$\ddag$};
\node at (3,2) {$\ddag$};
\end{scope}

\begin{scope}[xshift=10cm,yshift=0cm]
\draw[|-|] (1,2) -- node[above,pos=0.5]{$d$} (3,2);
\draw[|-|] (3,2) -- node[above,pos=0.5]{$b$} (5,2);
\draw[|-|] (0,0) -- node[below,pos=0.5]{$b$} (2,0);
\draw[|-|] (2,0) -- node[below,pos=0.5]{$a$} (4,0);
\draw[|-|] (4,0) -- node[below,pos=0.5]{$d$} (6,0);
\node[vertex] (vert_l) at (1,0) {};
\node[vertex] (vert_r) at (3,0) {}
	edge [->,>=latex,out=135,in=45,thick] node[above,pos=0.5]{$[b,P_{ba},a]$} (vert_l);
\node[vertex] (vert_ul) at (2,2)  {}
	edge [->,>=latex,out=225,in=90,thick] node[left,pos=0.5]{$(b,d)$} (vert_l);
\node[vertex] (vert_ur) at (4,2)  {}
	edge [->,>=latex,out=135,in=45,thick] node[above,pos=0.5]{$[d,P_{db},b]$} (vert_ul)
	edge [->,>=latex,out=225,in=90,thick] node[right,pos=0.5]{$(a,b)$} (vert_r);
\node at (0,0) {$\ddag$};
\node at (2,0) {$\ddag$};
\node at (6,0) {$\ddag$};
\node at (5,2) {$\ddag$};
\node at (3,2) {$\ddag$};
\end{scope}
\end{tikzpicture}
\end{center}
\caption{An illustration of how the first three formulae after \eqref{Fib_auto1} are deduced. The double daggers at tile edges denote supertile boundaries.}
\label{fig:Auto left}
\end{figure}
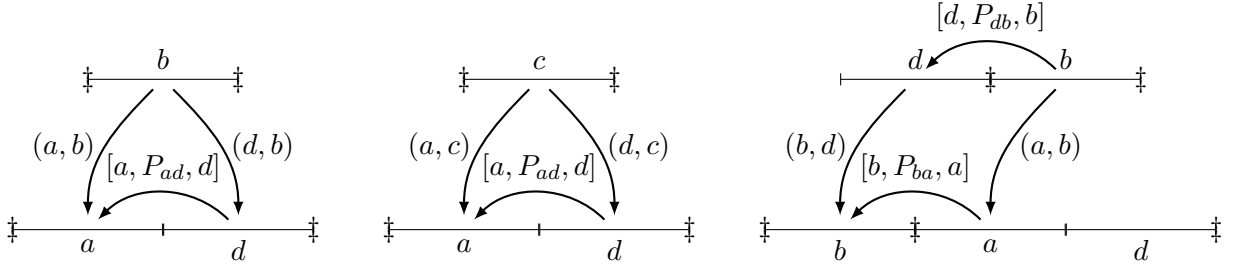

\begin{figure}
\begin{center}
\begin{tikzpicture}
\begin{scope}[xshift=0cm,yshift=0cm]
\draw[|-|] (0,4) -- node[above,pos=0.5]{$a$} (2,4);
\draw[|-|] (2,4) -- node[above,pos=0.5]{$d$} (4,4);
\draw[|-|] (-1,2) -- node[below,pos=0.5]{$a$} (1,2);
\draw[|-|] (1,2) -- node[below,pos=0.5]{$d$} (3,2);
\draw[|-|] (3,2) -- node[below,pos=0.5]{$b$} (5,2);
\draw[|-|] (0,0) -- node[below,pos=0.5]{$b$} (2,0);
\draw[|-|] (2,0) -- node[below,pos=0.5]{$a$} (4,0);
\draw[|-|] (4,0) -- node[below,pos=0.5]{$d$} (6,0);
\node[vertex] (vert_l) at (1,0) {};
\node[vertex] (vert_r) at (3,0) {}
	edge [<-,>=latex,out=135,in=45,thick] node[above,pos=0.5]{$[a,P_{ba},b]$} (vert_l);
\node[vertex] (vert_ul) at (2,2)  {}
	edge [->,>=latex,out=225,in=90,thick] node[left,pos=0.5]{$(b,d)$} (vert_l);
\node[vertex] (vert_ur) at (4,2)  {}
	edge [<-,>=latex,out=135,in=45,thick] node[above,pos=0.5]{$[b,P_{db},d]$} (vert_ul)
	edge [->,>=latex,out=225,in=90,thick] node[right,pos=0.5]{$(a,b)$} (vert_r);
\node[vertex] (vert_uul) at (1,4)  {}
	edge [->,>=latex,out=315,in=90,thick] node[left,pos=0.5]{$(d,a)$} (vert_ul);
\node[vertex] (vert_uur) at (3,4)  {}
	edge [->,>=latex,out=315,in=90,thick] node[right,pos=0.5]{$(b,d)$} (vert_ur);
\node at (0,0) {$\ddag$};
\node at (2,0) {$\ddag$};
\node at (-1,2) {$\ddag$};
\node at (6,0) {$\ddag$};
\node at (5,2) {$\ddag$};
\node at (3,2) {$\ddag$};
\node at (0,4) {$\ddag$};
\node at (4,4) {$\ddag$};
\end{scope}

\begin{scope}[xshift=8cm,yshift=0cm]
\draw[|-|] (0,4) -- node[above,pos=0.5]{$b$} (2,4);
\draw[|-|] (2,4) -- node[above,pos=0.5]{$a$} (4,4);
\draw[|-|] (-2,2) -- node[above,pos=0.5]{$a$} (0,2);
\draw[|-|] (0,2) -- node[above,pos=0.4]{$d$} (2,2);
\draw[|-|] (2,2) -- node[above,pos=0.6]{$c$} (4,2);
\draw[|-|] (4,2) -- node[above,pos=0.5]{$d$} (6,2);
\draw[|-|] (0,0) -- node[below,pos=0.5]{$b$} (2,0);
\draw[|-|] (2,0) -- node[below,pos=0.5]{$a$} (4,0);
\draw[|-|] (4,0) -- node[below,pos=0.5]{$d$} (6,0);
\node[vertex] (vert_l) at (1,0) {};
\node[vertex] (vert_r) at (3,0) {}
	edge [<-,>=latex,out=135,in=45,thick] node[above,pos=0.5]{$[a,P_{ba},b]$} (vert_l);
\node[vertex] (vert_ul) at (1,2)  {}
	edge [->,>=latex,out=270,in=90,thick] node[left,pos=0.5]{$(b,d)$} (vert_l);
\node[vertex] (vert_ur) at (3,2)  {}
	edge [<-,>=latex,out=135,in=45,thick] node[above,pos=0.5]{$[c,P_{dc},d]$} (vert_ul)
	edge [->,>=latex,out=270,in=90,thick] node[right,pos=0.5]{$(a,c)$} (vert_r);
\node[vertex] (vert_uul) at (1,4)  {}
	edge [->,>=latex,out=270,in=90,thick] node[left,pos=0.5]{$(d,b)$} (vert_ul);
\node[vertex] (vert_uur) at (3,4)  {}
	edge [->,>=latex,out=270,in=90,thick] node[right,pos=0.5]{$(c,a)$} (vert_ur);
\node at (0,0) {$\ddag$};
\node at (2,0) {$\ddag$};
\node at (-2,2) {$\ddag$};
\node at (6,0) {$\ddag$};
\node at (6,2) {$\ddag$};
\node at (2,2) {$\ddag$};
\node at (0,4) {$\ddag$};
\node at (4,4) {$\ddag$};
\end{scope}
\end{tikzpicture}
\end{center}
\caption{An illustration of how the first three formulae after \eqref{Fib_auto2} are deduced. The double daggers at tile edges denote supertile boundaries.}
\label{fig:Auto right}
\end{figure}
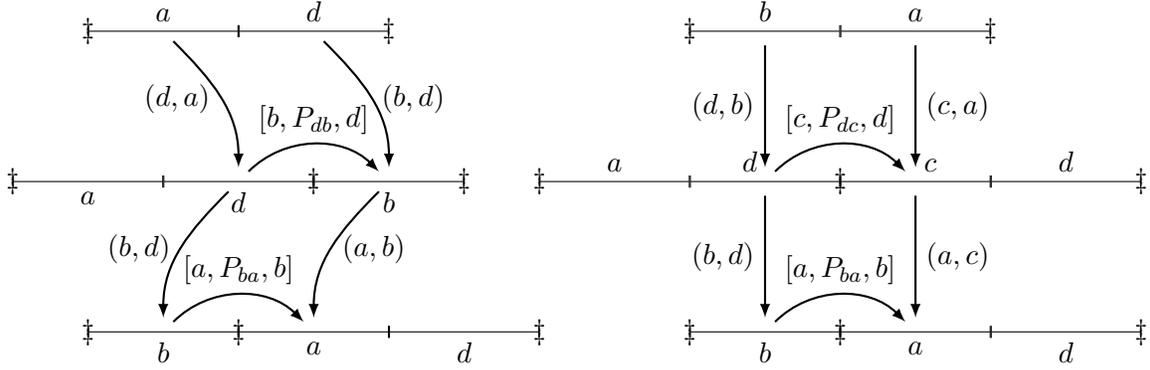

\begin{example}\label{ex: fib rules}
We return to the border forcing Fibonacci tiling of Examples \ref{ex: fib} and \ref{geometric intuition}. The self-similar inverse semigroup is generated by doubly pointed patches consisting of all possible single tile patches and connected pair patches appearing anywhere in a Fibonacci tiling. Note that Figures \ref{fig:Auto left} and \ref{fig:Auto right} show how we geometrically deduce the self-similar relation on a selection of generating elements. The following doubly pointed patches, represented here along with their self-similar action, generate the semigroup of the Fibonacci tiling.

	\begingroup
		\allowdisplaybreaks
\begin{align}
\label{Fib_auto1}
[a,P_{ad},d] \cdot (d,b)w&=(a,b)\ [b,P_b,b] \cdot w; \\
\notag
[a,P_{ad},d] \cdot (d,c)w&=(a,c) \ [c,P_c,c] \cdot w; \\
\notag
[b,P_{ba},a] \cdot (a,b)w&=(b,d) \ [d,P_{db},b] \cdot w;  \\
\notag
[b,P_{ba},a] \cdot (a,c)w&=(b,d)\ [d,P_{dc},c] \cdot w; \\
\notag
[c,P_{cd},d] \cdot (d,a)w&=(c,a)\ [a,P_{a},a] \cdot w; \\
\notag
[d,P_{db},b] \cdot (b,d)(d,a)w&=(d,c)\ [c,P_{cd},d] \cdot (d,a)w; \\
\notag
[d,P_{db},b] \cdot (b,d)(d,b)w&=(d,a)\ [a,P_{ad},d] \cdot (d,b)w; \\
\notag
[d,P_{db},b] \cdot (b,d)(d,c)w&=(d,a)\ [a,P_{ad},d] \cdot (d,c)w; \\
\notag
[d,P_{dc},c] \cdot (c,a)w&=(d,b)\ [b,P_{ba},a] \cdot w; \\
\label{Fib_auto2}
[a,P_{ba},b] \cdot (b,d)(d,a)w&=(a,b)\ [b,P_{db},d] \cdot (d,a)w; \\
\notag
[a,P_{ba},b] \cdot (b,d)(d,b)w&=(a,c)\ [c,P_{dc},d] \cdot (d,b)w; \\
\notag
[a,P_{ba},b] \cdot (b,d)(d,c)w&=(a,b)\ [b,P_{db},d] \cdot (d,c)w; \\
\notag
[b,P_{db},d] \cdot (d,a)w&=(b,d)\ [d,P_{ad},a] \cdot w; \\
\notag
[b,P_{db},d] \cdot (d,c)w&=(b,d)\ [d,P_{cd},c] \cdot w; \\
\notag
[c,P_{dc},d] \cdot (d,b)w&=(c,a)\ [a,P_{ba},b] \cdot w; \\
\notag
[d,P_{ad},a] \cdot (a,b)w&=(d,b)\ [b,P_{b},b] \cdot w; \\
\notag
[d,P_{ad},a] \cdot (a,c)w&=(d,c)\ [c,P_{c},c] \cdot w; \\
\notag
[d,P_{cd},c] \cdot (c,a)w&=(d,a)\ [a,P_{a},a] \cdot w. 
\end{align}
\qed
\end{example}
	\endgroup

\begin{example}
The simplest border-forcing 2-dimensional example comes from the half-hex tiling. We note that there are six prototiles $\{p_0,p_1,p_2,p_3,p_4,p_5\}$, where the subscript denotes the number of rotations of $p_0$ by $\pi/3$. Similarly, the substitution of each prototile is equivalent up to rotations by $n\pi/3$, see Figure \ref{proto}. Thus, always taking addition to be mod 6, the tile inclusions can be written as:
\[
\{(p_i,p_i), (p_{i+2},p_i), (p_{i+3},p_i), (p_{i+4},p_i) \mid i=0,1,2,3,4,5\}.
\]

\begin{figure}
\begin{center}
\begin{tikzpicture}
\begin{scope}[yshift=1cm]
\HHex{0}{1.73}{180}{0}; 
\HHex{1.5}{0.866}{120}{0}; 
\HHex{-1.5}{0.866}{240}{0}; 
\HHex{0}{-1.73}{0}{0}; 
\HHex{1.5}{-0.866}{60}{0}; 
\HHex{-1.5}{-0.866}{-60}{0}; 
\node at (30:1.3) {$p_2$};
\node at (90:1.3) {$p_3$};
\node at (150:1.3) {$p_4$};
\node at (210:1.3) {$p_5$};
\node at (270:1.3) {$p_0$};
\node at (330:1.3) {$p_1$};
\end{scope}
\begin{scope}[xshift=10cm]
\HHex{-4.5}{0.5}{0}{0};
\node at (-4.5,0.85) {$p_0$};
\HHexI{0}{0}{0}{-1};
\node at (90:0.35) {$p_0$};
\node at (30:1.3) {$p_2$};
\node at (90:1.3) {$p_3$};
\node at (150:1.3) {$p_4$};
\draw[->, thick] (-3.25,1) -- node[above] {$\varphi$} (-2.15,1);
\end{scope}
\end{tikzpicture}
\end{center}
\caption{The half-hex prototiles are on the left and the substitution of $p_0$ is on the right. All other substitutions are rotations of $p_0$ by $n\pi/3$.}
\label{proto}
\end{figure}
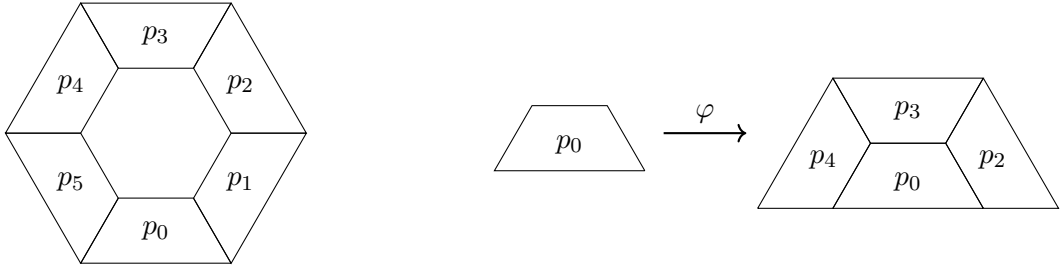

The substitution of $p_0$ appears in Figure \ref{proto}. The graph associated with this substitution appears in Figure \ref{HH_sub}.

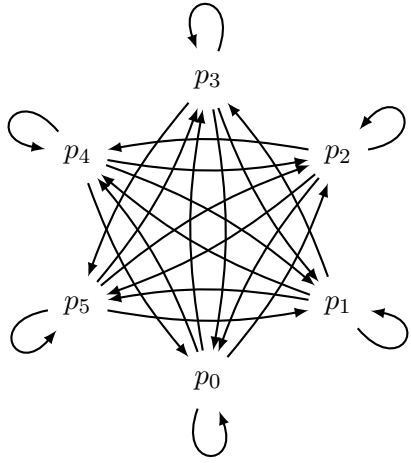
\begin{figure}
\begin{center}
\begin{tikzpicture}
\node[vertex] (vert_p0) at (270:2) {$p_0$}
	edge [->,>=latex,out=250,in=290,thick,loop] node[below,pos=0.5]{} (vert_p0);
\node[vertex] (vert_p1) at (330:2) {$p_1$}
	edge [->,>=latex,out=310,in=350,thick,loop] node[right,pos=0.5]{} (vert_p1);
\node[vertex] (vert_p2) at (30:2) {$p_2$}
	edge [->,>=latex,out=10,in=50,thick,loop] node[right,pos=0.5]{} (vert_p2)
	edge [<-,>=latex,out=250,in=50,thick] node[right,pos=0.5]{} (vert_p0)
	edge [->,>=latex,out=230,in=70,thick] node[right,pos=0.5]{} (vert_p0);
\node[vertex] (vert_p3) at (90:2) {$p_3$}
	edge [->,>=latex,out=70,in=110,thick,loop] node[above,pos=0.5]{} (vert_p3)
	edge [<-,>=latex,out=260,in=100,thick] node[right,pos=0.5]{} (vert_p0)
	edge [->,>=latex,out=280,in=80,thick] node[right,pos=0.5]{} (vert_p0)
	edge [<-,>=latex,out=250+60,in=50+60,thick] node[right,pos=0.5]{} (vert_p1)
	edge [->,>=latex,out=230+60,in=70+60,thick] node[right,pos=0.5]{} (vert_p1);
\node[vertex] (vert_p4) at (150:2) {$p_4$}
	edge [->,>=latex,out=130,in=170,thick,loop] node[left,pos=0.5]{} (vert_p4)
	edge [<-,>=latex,out=260+60,in=100+60,thick] node[right,pos=0.5]{} (vert_p1)
	edge [->,>=latex,out=280+60,in=80+60,thick] node[right,pos=0.5]{} (vert_p1)
	edge [<-,>=latex,out=250+60,in=50+60,thick] node[right,pos=0.5]{} (vert_p0)
	edge [->,>=latex,out=230+60,in=70+60,thick] node[right,pos=0.5]{} (vert_p0)
	edge [<-,>=latex,out=250+120,in=50+120,thick] node[right,pos=0.5]{} (vert_p2)
	edge [->,>=latex,out=230+120,in=70+120,thick] node[right,pos=0.5]{} (vert_p2);
\node[vertex] (vert_p5) at (210:2) {$p_5$}
	edge [->,>=latex,out=190,in=230,thick,loop] node[left,pos=0.5]{} (vert_p5)
	edge [<-,>=latex,out=260+120,in=100+120,thick] node[right,pos=0.5]{} (vert_p2)
	edge [->,>=latex,out=280+120,in=80+120,thick] node[right,pos=0.5]{} (vert_p2)
	edge [<-,>=latex,out=10,in=170,thick] node[right,pos=0.5]{} (vert_p1)
	edge [->,>=latex,out=-10,in=190,thick] node[right,pos=0.5]{} (vert_p1)
	edge [<-,>=latex,out=250+180,in=50+180,thick] node[right,pos=0.5]{} (vert_p3)
	edge [->,>=latex,out=230+180,in=70+180,thick] node[right,pos=0.5]{} (vert_p3);
\end{tikzpicture}	
\end{center}
\caption{The substitution graph of the half-hex tiling.}
\label{HH_sub}
\end{figure}

The self-similar inverse semigroup is generated by the collection of doubly pointed patches consisting of all single tile patches and connected pair patches appearing in a half-hex tiling. All such tile pairs appear in Figure \ref{automaton_elements} up to rotation. For each connected pair of tiles, the generating doubly pointed patch $[q,X,p]$ represents a translation across an edge of tile $p$ where $X \in \{A,B,C,D\}$ represents the two-tile patch connected across one of the 4 edges of $p$. For $p_0$, we set edges $A$--$D$ to be the 4 edges starting from the bottom and rotating counterclockwise. See Figure \ref{automaton_elements} for complete clarity. 

\begin{figure}
\begin{center}
\begin{tikzpicture}
\begin{scope}[xshift=0cm,yshift=0cm]
\HHex{0}{0}{0}{-1}; 
\HHex{0}{0}{180}{-1}; 
\node[vertex] (vert_p) at (90:1.1) {$(p_i,x)$};
\node[vertex] (vert_q) at (-90:1.1) {$(p_{i+3},x)$}
	edge [<-,>=latex,out=132,in=228,thick] node[right,pos=0.55]{$\scriptstyle [p_{i+3},A,p_i]$} (vert_p);
\end{scope}

\begin{scope}[xshift=4.2cm,yshift=-1.8cm]
\HHex{0}{0}{0}{-1}; 
\HHex{3}{1.732}{60}{-1}; 
\node[vertex] (vert_p) at (90:0.6) {$(p_i,x)$};
\node[vertex] (vert_q) at (40:3.1) {$(p_{i+1},x)$}
	edge [<-,>=latex,out=220,in=20,thick] node[left,pos=0.45]{$\scriptstyle [p_{i+1},B,p_i]$} (vert_p);
\end{scope}

\begin{scope}[xshift=9cm,yshift=-1.8cm]
\HHex{0}{0}{0}{-1}; 
\HHex{3}{1.732}{120}{-1}; 
\node[vertex] (vert_p) at (90:0.6) {$(p_i,x)$};
\node[vertex] (vert_q) at (35:2.8) {$(p_{i+2},x)$}
	edge [<-,>=latex,out=220,in=0,thick] node[left,pos=0.15]{$\scriptstyle [p_{i+2},B,p_i]$} (vert_p);
\end{scope}

\begin{scope}[xshift=0cm,yshift=-7.5cm]
\HHex{0}{0}{0}{-1}; 
\HHex{0}{3.464}{180}{-1}; 
\node[vertex] (vert_p) at (90:0.6) {$(p_i,x)$};
\node[vertex] (vert_q) at (90:2.8) {$(p_{i+3},x)$}
	edge [<-,>=latex,out=228,in=132,thick] node[right,pos=0.6]{$\scriptstyle [p_{i+3},C,p_i]$} (vert_p);
\end{scope}

\begin{scope}[xshift=4.7cm,yshift=-7.5cm]
\HHex{0}{0}{0}{-1}; 
\HHex{0}{3.464}{240}{-1}; 
\node[vertex] (vert_p) at (90:0.6) {$(p_i,x)$};
\node[vertex] (vert_q) at (80:3.2) {$(p_{i+4},x)$}
	edge [<-,>=latex,out=228,in=132,thick] node[right,pos=0.65]{$\scriptstyle [p_{i+4},C,p_i]$} (vert_p);
\end{scope}

\begin{scope}[xshift=10cm,yshift=-7.5cm]
\HHex{0}{0}{0}{-1}; 
\HHex{0}{3.464}{120}{-1}; 
\node[vertex] (vert_p) at (90:0.6) {$(p_i,x)$};
\node[vertex] (vert_q) at (100:3.2) {$(p_{i+2},x)$}
	edge [<-,>=latex,out=312,in=48,thick] node[left,pos=0.65]{$\scriptstyle [p_{i+2},C,p_i]$} (vert_p);
\end{scope}

\begin{scope}[xshift=3cm,yshift=-12cm]
\HHex{0}{0}{0}{-1}; 
\HHex{-3}{1.732}{300}{-1}; 
\node[vertex] (vert_p) at (90:0.6) {$(p_i,x)$};
\node[vertex] (vert_q) at (140:3.2) {$(p_{i+5},x)$}
	edge [<-,>=latex,out=320,in=160,thick] node[right,pos=0.47]{$\scriptstyle [p_{i+5},D,p_i]$} (vert_p);
\end{scope}

\begin{scope}[xshift=10cm,yshift=-12cm]
\HHex{0}{0}{0}{-1}; 
\HHex{-3}{1.732}{-120}{-1}; 
\node[vertex] (vert_p) at (90:0.6) {$(p_i,x)$};
\node[vertex] (vert_q) at (145:2.8) {$(p_{i+4},x)$}
	edge [<-,>=latex,out=320,in=180,thick] node[right,pos=0.10]{$\, \scriptstyle [p_{i+4},D,p_i]$} (vert_p);
\end{scope}

\end{tikzpicture}
\end{center}
\caption{The possible two-tile patches with respect to reference tile $p_i$.}
\label{automaton_elements}
\end{figure}
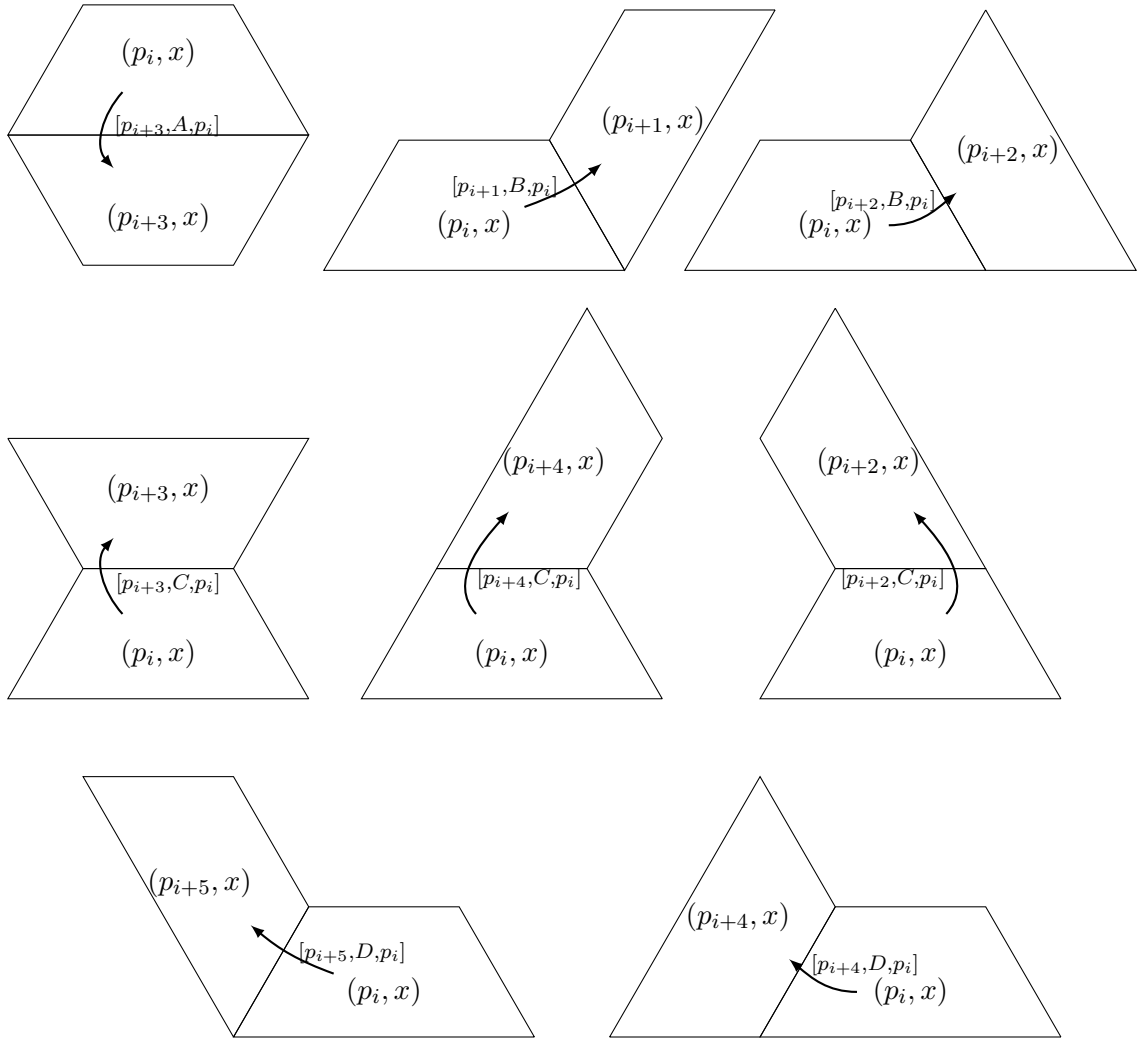

We begin by describing the self-similar relation for the generating doubly pointed patches across the long edge of tile $p_i$, labelled $A$. Note that all subscripts are treated mod$\ 6$ and $w \in \mc{F}$.

	\begingroup
		\allowdisplaybreaks
\begin{align}
\label{HH_auto_1}
[p_{i+3},A,p_i] \cdot (p_i,p_i)w&=(p_{i+3},p_{i+3}) \ [p_{i+3},A,p_i] \cdot w; \\
\notag
[p_{i+3},A,p_i] \cdot (p_i,p_{i+2})(p_{i+2},p_{i+5})w&=(p_{i+3},p_{i+1}) \ [p_{i+1},D,p_{i+2}] \cdot (p_{i+2},p_{i+5})w; \\
\notag
[p_{i+3},A,p_i] \cdot (p_i,p_{i+2})(p_{i+2},p_{i+2})w&=(p_{i+3},p_{i}) \ [p_{i},D,p_{i+2}] \cdot (p_{i+2},p_{i+2})w; \\
\notag
[p_{i+3},A,p_i] \cdot (p_i,p_{i+3})(p_{i+3},p_{i+5})w&=(p_{i+3},p_{i+5}) \ [p_{i+5},C,p_{i+3}] \cdot (p_{i+3},p_{i+5})w;\\
\notag
[p_{i+3},A,p_i] \cdot (p_i,p_{i+3})(p_{i+3},p_{i+1})w&=(p_{i+3},p_{i+1}) \ [p_{i+1},C,p_{i+3}] \cdot (p_{i+3},p_{i+1})w;\\
\notag
[p_{i+3},A,p_i] \cdot (p_i,p_{i+3})(p_{i+3},p_{i})w&=(p_{i+3},p_{i}) \ [p_{i},C,p_{i+3}] \cdot (p_{i+3},p_{i})w;\\
\notag
[p_{i+3},A,p_i] \cdot (p_i,p_{i+4})(p_{i+4},p_{i+1})w&=(p_{i+3},p_{i+5}) \ [p_{i+5},B,p_{i+4}] \cdot (p_{i+4},p_{i+1})w;\\
\notag
[p_{i+3},A,p_i] \cdot (p_i,p_{i+4})(p_{i+4},p_{i+4})w&=(p_{i+3},p_{i}) \ [p_{i},B,p_{i+4}] \cdot (p_{i+4},p_{i+4})w.
\end{align}
	\endgroup

In order to geometrically understand these relations, we illustrate the first two self-similar relations from \eqref{HH_auto_1} in Figures \ref{fig:HH_auto_1} and \ref{fig:HH_auto_2}.

\begin{figure}
\begin{center}
\begin{tikzpicture}
\begin{scope}[xshift=0cm,yshift=0cm]
\HHexI{0}{0}{0}{-1}; 
\HHexI{0}{0}{180}{-1}; 
\node (vert_p) at (90:0.6) {$(p_0,p_0)$};
\node (vert_q) at (270:0.6) {$(p_{3},p_3)$}
	edge [<-,>=latex,out=132,in=228,thick] node[right,pos=0.75]{$\scriptstyle [p_{3},A,p_0]$} (vert_p);
\end{scope}
\begin{scope}[xshift=6cm,yshift=0cm]
\HHex{0}{0}{0}{-1}; 
\HHex{0}{0}{180}{-1}; 
\node[vertex] (vert_p) at (90:1.1) {$(p_0,x)$};
\node[vertex] (vert_q) at (270:1.1) {$(p_{3},x)$}
	edge [<-,>=latex,out=132,in=228,thick] node[right,pos=0.65]{$\scriptstyle [p_{3},A,p_0]$} (vert_p);
\end{scope}
\end{tikzpicture}
\end{center}
\caption{An illustration of the first formula $[p_{3},A,p_0] \cdot (p_0,p_0)w=(p_{3},p_{3}) \ [p_{3},A,p_0] \cdot w$ in \eqref{HH_auto_1} with $i=0$. The left hand side shows the action $[p_{3},A,p_0] \cdot (p_0,p_0)=(p_3,p_3)$ and the right hand side shows that the element $[p_{3},A,p_0]$ comes from the relationship between 1-supertiles.}
\label{fig:HH_auto_1}
\end{figure}

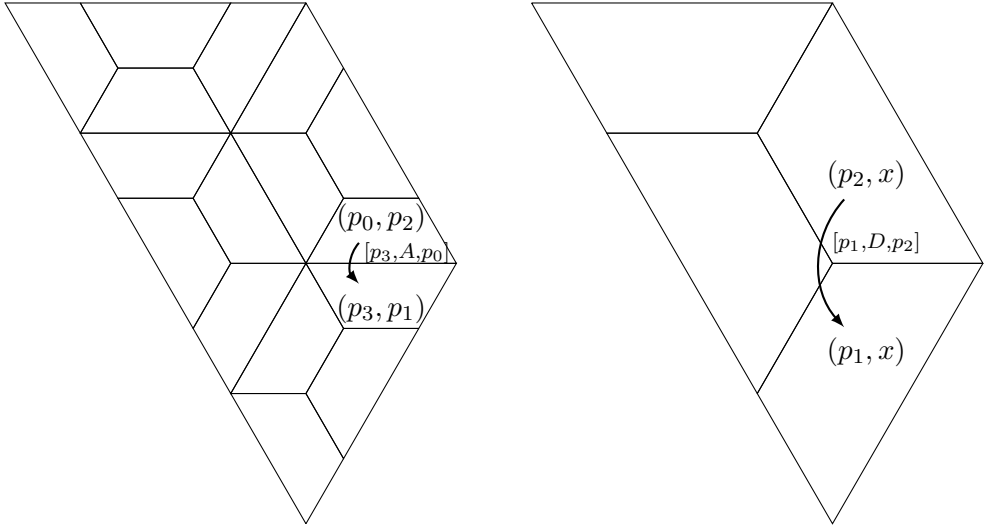
\begin{figure}
\begin{center}
\begin{tikzpicture}
\begin{scope}[xshift=0cm,yshift=0cm]
\HHexII{-3}{0}{-60}{-2}; 
\node (vert_p) at (90:0.6) {$(p_0,p_2)$};
\node (vert_q) at (270:0.6) {$(p_{3},p_1)$}
	edge [<-,>=latex,out=132,in=228,thick] node[right,pos=0.75]{$\scriptstyle [p_{3},A,p_0]$} (vert_p);
\end{scope}
\begin{scope}[xshift=7cm,yshift=0cm]
\HHexI{-3}{0}{-60}{-2}; 
\node (vert_p) at (115:1.3) {$(p_2,x)$};
\node (vert_q) at (245:1.3) {$(p_{1},x)$}
	edge [<-,>=latex,out=132,in=228,thick] node[right,pos=0.65]{$\scriptstyle [p_{1},D,p_2]$} (vert_p);
\end{scope}
\end{tikzpicture}
\end{center}
\caption{An illustration of the second formula $[p_{3},A,p_0] \cdot (p_0,p_{2})(p_{2},p_{5})w=(p_{3},p_{1}) \ [p_{1},D,p_{2}] \cdot (p_{2},p_{5})w$ in \eqref{HH_auto_1} with $i=0$. The left hand side shows the action $[p_{3},A,p_0] \cdot (p_0,p_2)=(p_3,p_1)$ and the right hand side shows that the element $[p_{1},D,p_{2}]$ comes from the relationship between 1-supertiles.}
\label{fig:HH_auto_2}
\end{figure}

We now describe the automaton elements $b_i$, $c_i$ and $d_i$ across the shorter edges of tile $p_i$. Again, we note that all subscripts are treated mod$\ 6$ and $w \in \mc{F}$.

	\begingroup
		\allowdisplaybreaks
\begin{align*}
[p_{i+1},B,p_i] \cdot (p_i,p_{i+2})w&=(p_{i+1},p_{i+5}) \ [p_{i+5},A,p_{i+2}] \cdot w; \\
[p_{i+1},B,p_i] \cdot (p_i,p_{i+3})w&=(p_{i+1},p_{i+3}) w; \\
[p_{i+1},B,p_i] \cdot (p_i,p_{i+4})w&=(p_{i+1},p_{i+4}) w; \\
[p_{i+2},B,p_i] \cdot (p_i,p_i)w&=(p_{i+2},p_{i}) w; \\
[p_{i+2},C,p_i] \cdot (p_i,p_{i+2})w&=(p_{i+2},p_{i+2}) w; \\
[p_{i+3},C,p_i] \cdot (p_i,p_i)w&=(p_{i+3},p_{i}) w; \\
[p_{i+3},C,p_i] \cdot (p_i,p_{i+3})w&=(p_{i+3},p_{i+3}) w; \\
[p_{i+4},C,p_i] \cdot (p_i,p_{i+4})w&=(p_{i+4},p_{i+4}) w; \\
[p_{i+4},D,p_i] \cdot (p_i,p_i)w&=(p_{i+4},p_{i}) w; \\
[p_{i+5},D,p_i] \cdot (p_i,p_{i+2})w&=(p_{i+5},p_{i+2}) w; \\
[p_{i+5},D,p_i] \cdot (p_i,p_{i+3})w&=(p_{i+5},p_{i+3}) w; \\
[p_{i+5},D,p_i] \cdot (p_i,p_{i+4})w&=(p_{i+5},p_{i+1}) \ [p_{i+1},A,p_{i+4}] \cdot w. 
\end{align*}
	\endgroup

Let us note that some relations here could have been omitted by also exploiting the rotational equivariance of substitution. We shall make use of this in the following example. \qed

\end{example}

\begin{figure}
\begin{center}
\includegraphics[scale=0.07]{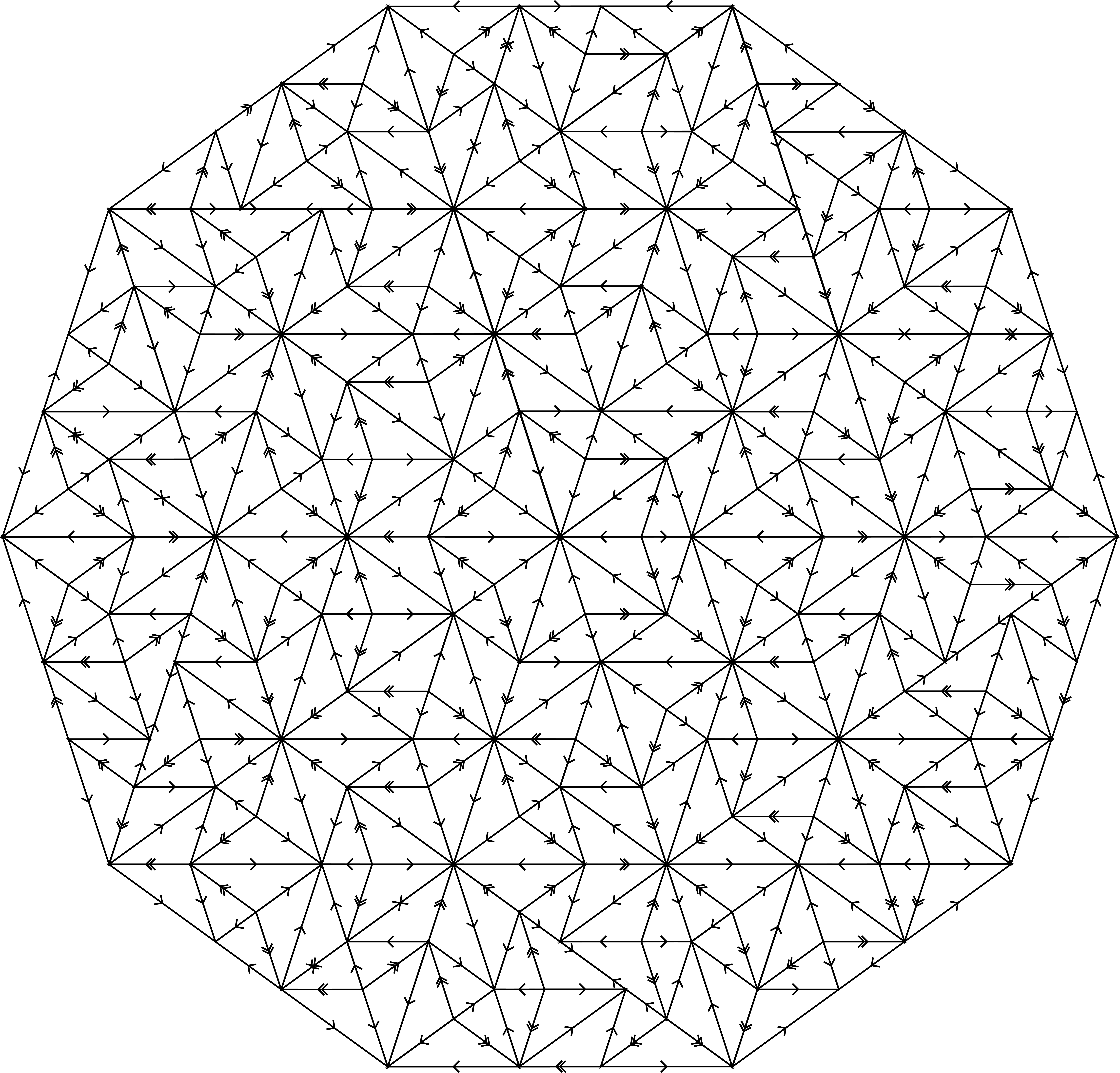}
\end{center}
\caption{A patch of a Penrose tiling.}
\label{Penrose_patch}
\end{figure}

\begin{example}
The most well-known 2-dimensional example was given by Penrose \cite{pentaplexity}, represented here as Robinson triangles. We note that there are forty prototiles $\{a_i,b_i,ra_i,rb_i \mid i=0, \ldots,9\}$, where the subscript denotes the number of rotations by $\pi/5$. By $ra_i$, we mean the reflection of the tile $a_i$ across the vertical, followed by rotation by $i\pi/5$, and analogously for $rb_i$ (we emphasise that we reflect the tile $a_0$ first, and then rotate). The substitutions of $a_0$ and $b_0$ appear in Figure \ref{Penrose_proto}. We have not attempted to display the graph of the substitution. All other prototiles are rigid motions of these and substitution on them is determined by equivariance of the substitution $\sub$. For example, we have that $\sub(ra_4) = \sub(\theta_4(\tau a_0)) = \theta_4 \circ \tau(\sub(a_0))$, where $\theta_4$ is rotation by $4\pi/5$ and $\tau$ is reflection across the vertical. Thus, always taking addition to be (mod$\ 10$), the tile inclusions can be written as
\begin{align*}
(a_7,a_0),(b_3,a_0),(rb_0,b_0),(ra_6,b_0),(b_4,b_0)
\end{align*}
along with the required rigid motions of the above (thus there are $20 \times 5 = 100$ in total). A patch of the Penrose tiling appears in Figure \ref{Penrose_patch}.

\begin{figure}
\begin{center}
\begin{tikzpicture}
\node at (0,1.8) {\includegraphics[scale=0.2]{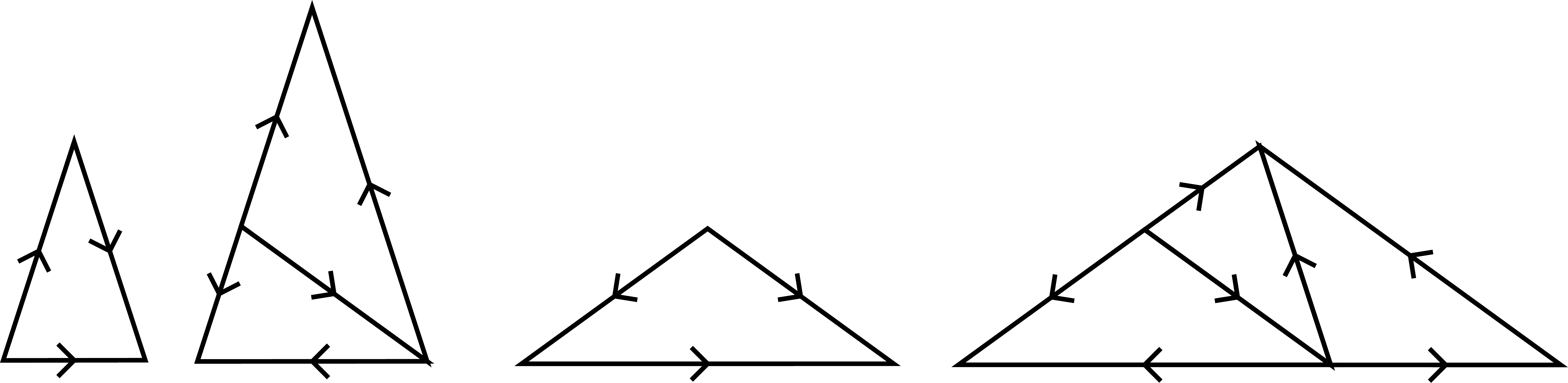}};
\node at (-6.15,0.8) {$a_0$};
\draw[->,thick] (-5.6,1.5) -- node[above] {$\varphi$} (-5.0,1.5);
\node at (-4.35,0.65) {$a_7$};
\node at (-4.0,1.7) {$b_3$};
\node at (-0.7,0.7) {$b_0$};
\draw[->,thick] (0.9,0.9) -- node[above] {$\varphi$} (1.5,0.9);
\node at (3.1,0.7) {$rb_0$};
\node at (5.2,0.8) {$b_4$};
\node at (3.9,1.4) {$ra_6$};
\end{tikzpicture}
\end{center}
\caption{Penrose substitution.}
\label{Penrose_proto}
\end{figure}

\begin{figure}
\begin{center}
\begin{tikzpicture}
\node at (0,0) {\includegraphics[scale=0.21]{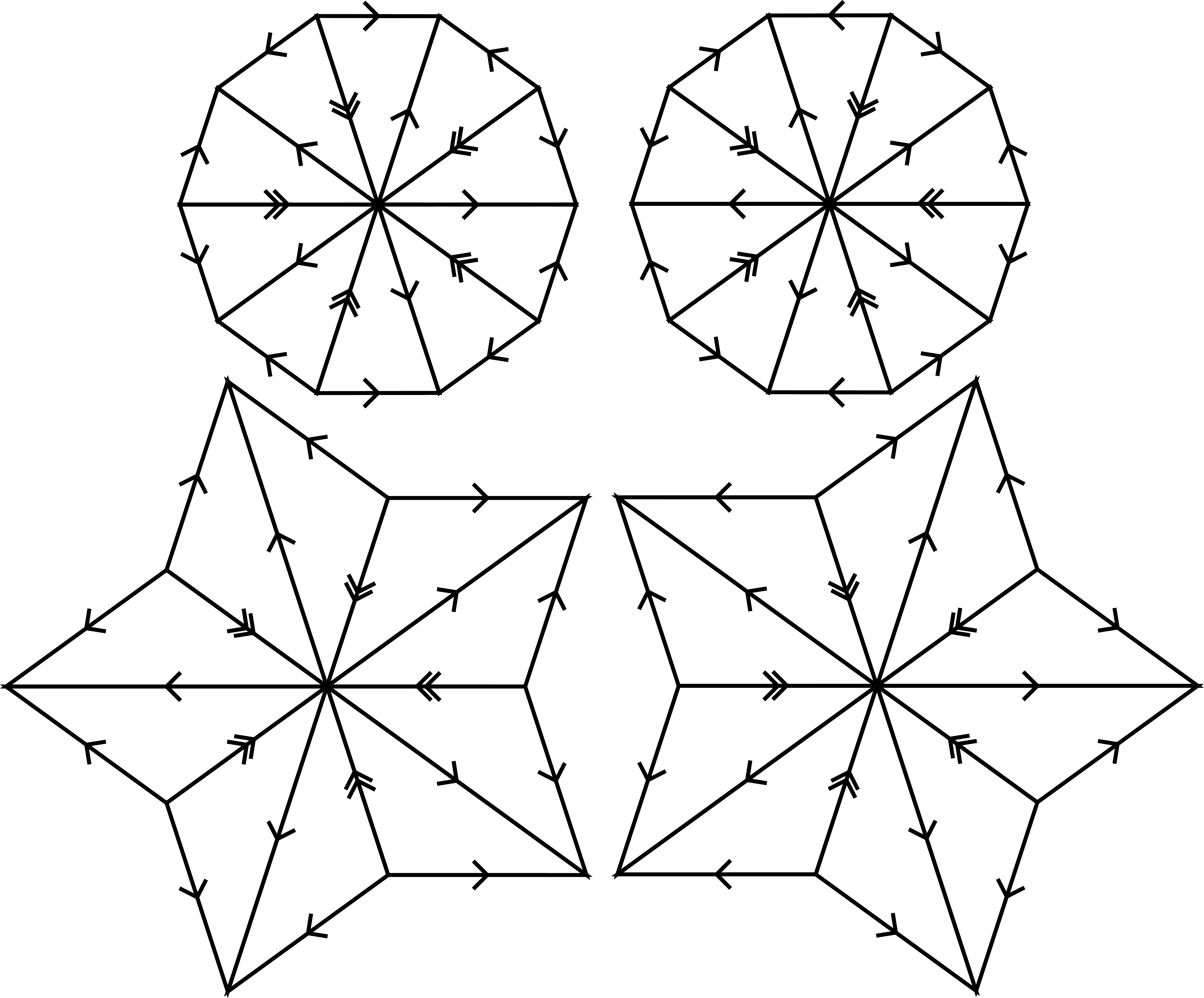}};
\node at (-2.35,1.8) {$a_0$};
\node at (-1.0,2.7) {$a_2$};
\node at (-1.5,4.3) {$a_4$};
\node at (-3.25,4.3) {$a_6$};
\node at (-3.75,2.7) {$a_8$};
\node at (-1.55,2.0) {$ra_1$};
\node at (-1.0,3.5) {$ra_3$};
\node at (-2.35,4.5) {$ra_5$};
\node at (-3.75,3.5) {$ra_7$};
\node at (-3.25,2) {$ra_9$};
\node at (-2.35+4.75,1.8) {$ra_0$};
\node at (-1.0+4.75,2.7) {$ra_2$};
\node at (-1.5+4.75,4.3) {$ra_4$};
\node at (-3.25+4.75,4.3) {$ra_6$};
\node at (-3.75+4.75,2.7) {$ra_8$};
\node at (-1.55+4.75,2.0) {$a_1$};
\node at (-1.0+4.75,3.5) {$a_3$};
\node at (-2.35+4.75,4.5) {$a_5$};
\node at (-3.75+4.75,3.5) {$a_7$};
\node at (-3.25+4.75,2) {$a_9$};
\node at (-4.6,-1.5) {$rb_0$};
\node at (-3.9,-3.4) {$rb_2$};
\node at (-1.8,-3.3) {$rb_4$};
\node at (-1.3,-1.4) {$rb_6$};
\node at (-2.9,-0.3) {$rb_8$};
\node at (-4.6,-2.5) {$b_5$};
\node at (-2.95,-3.6) {$b_7$};
\node at (-1.3,-2.5) {$b_9$};
\node at (-1.8,-0.6) {$b_1$};
\node at (-3.9,-0.6) {$b_3$};
\node at (-4.6+6,-1.5) {$b_4$};
\node at (-4.1+6,-3.3) {$b_6$};
\node at (-2.13+6,-3.3) {$b_8$};
\node at (-1.4+6,-1.55) {$b_0$};
\node at (-3.05+6,-0.3) {$b_2$};
\node at (-4.6+6,-2.5) {$rb_1$};
\node at (-3.05+6,-3.6) {$rb_3$};
\node at (-1.4+6,-2.4) {$rb_5$};
\node at (-2.1+6,-0.6) {$rb_7$};
\node at (-4.1+6,-0.7) {$rb_9$};
\end{tikzpicture}
\end{center}
\caption{The Anderson--Putnam complex of the Penrose tiling \cite{AP}.}
\label{Penrose_AP}
\end{figure}

The self-similar inverse semigroup is generated by the collection of doubly pointed patches consisting of all single tile patches and adjacent pair patches appearing anywhere in a Penrose tiling. The Anderson--Putnam complex \cite[Section 10.4]{AP}, copied in Figure \ref{Penrose_AP}, neatly illustrates each possible two-tile patch using the edge identifications. For each connected pair of tiles, the doubly pointed patch $[q,X,p]$ represents translation from $p$ to $q$ across a specific edge of tile $p$ where $X \in \{B,L,R\}$ denotes crossing the \emph{B}ottom, \emph{L}eft, or \emph{R}ight edges of prototile $p$ with orientation from Figure \ref{Penrose_proto}.

We now describe the generating two-tile patch elements associated with moving across an edge of $a_i$. Again, we note that all subscripts are treated mod$\ 10$ and $w \in \mc{F}$.

	\begingroup
		\allowdisplaybreaks
\begin{align*}
[ra_{i+5},B,a_i] \cdot (a_{i},a_{i+3})w&=(ra_{i+5},b_{i+9}) \ [b_{i+9},L,a_{i+3}] \cdot w; \\
[b_{i+6},L,a_i] \cdot (a_{i},a_{i+3})w&=(b_{i+6},a_{i+3})w; \\
[ra_{i+1},R,a_i] \cdot (a_{i},a_{i+3})w&=(ra_{i+1},ra_{i+8}) \ [ra_{i+8},B,a_{i+3}] \cdot w; \\
[ra_{i+5},B,a_i] \cdot (a_{i},rb_{i+6})(rb_{i+6},ra_{i+2})w&=(ra_{i+5},ra_{i+2}) \ [ra_{i+2},R,rb_{i+6}] \cdot (rb_{i+6},ra_{i+2})w; \\ 
[ra_{i+5},B,a_i] \cdot (a_{i},rb_{i+6})(rb_{i+6},b_{i+6})w&=(ra_{i+5},ra_{i+2}) \ [ra_{i+2},R,rb_{i+6}] \cdot (rb_{i+6},b_{i+6})w; \\ 
[ra_{i+5},B,a_i] \cdot (a_{i},rb_{i+6})(rb_{i+6},rb_{i})w&=(ra_{i+5},b_{i+9}) \ [b_{i+9},L,rb_{i+6}] \cdot (rb_{i+6},rb_{i})w; \\
[b_{i+6},L,a_i] \cdot (a_{i},rb_{i+6})w&=(b_{i+6},rb_{i+6})w; \\
[rb_{i+2},R,a_i] \cdot (a_{i},rb_{i+6})w&=(rb_{i+2},rb_{i+6})w.
\end{align*}
	\endgroup

Note that reflection acts on tiles by $a_i \leftrightarrow ra_{-i}$, $b_i \leftrightarrow rb_{-i}$ and edge types by $B \leftrightarrow B$, $L \leftrightarrow R$. Therefore, the above relations determine also those for reflections. For example, the last row determines the relation $[b_{i+8},L,ra_i] \cdot (ra_{i},b_{i+4})w = (b_{i+8},b_{i+4})w$, where we apply the above conversions, write $-2 \equiv 8 \mod 10$ etc., and also substitute $-i$ with $i$.

The following relations describe the generating two-tile patch inverse semigroup elements associated with moving across an edge of $b_i$.

	\begingroup
		\allowdisplaybreaks
\begin{align*}
[rb_{i+5},B,b_i] \cdot (b_{i},a_{i+7})(a_{i+7},a_{i})w&=(rb_{i+5},ra_{i+8}) \ [ra_{i+8},R,a_{i+7}] \cdot (a_{i+7},a_{i})w; \\
[rb_{i+5},B,b_i] \cdot (b_{i},a_{i+7})(a_{i+7},rb_{i+3})w&=(rb_{i+5},rb_{i+9}) \ [rb_{i+9},R,a_{i+7}] \cdot (a_{i+7},rb_{i+3})w; \\
[a_{i+4},L,b_i] \cdot (b_{i},a_{i+7})w&=(a_{i+4},a_{i+7})w; \\
[rb_{i+3},R,b_i] \cdot (b_{i},a_{i+7})w&=(rb_{i+3},b_{i+3}) \ [b_{i+3},L,a_{i+7}] \cdot w; \\
[rb_{i+5},B,b_i] \cdot (b_{i},b_{i+6})(b_{i+6},a_{i+3})w&=(rb_{i+5},rb_{i+9}) \ [rb_{i+9},R,b_{i+6}] \cdot (b_{i+6},a_{i+3})w; \\
[rb_{i+5},B,b_i] \cdot (b_{i},b_{i+6})(b_{i+6},b_{i+2})w&=(rb_{i+5},ra_{i+8}) \ [ra_{i+8},R,b_{i+6}] \cdot (b_{i+6},b_{i+2})w; \\
[rb_{i+5},B,b_i] \cdot (b_{i},b_{i+6})(b_{i+6},rb_{i+6})w&=(rb_{i+5},rb_{i+9}) \ [rb_{i+9},R,b_{i+6}] \cdot (b_{i+6},rb_{i+6})w; \\
[rb_{i+7},L,b_i] \cdot (b_{i},b_{i+6})w&=(rb_{i+7},rb_{i+1}) \ [rb_{i+1},B,b_{i+6}] \cdot w; \\
[ra_{i+2},R,b_i] \cdot (b_{i},b_{i+6})w&=(ra_{i+2},b_{i+6})w; \\
[rb_{i+5},B,b_i] \cdot (b_{i},rb_{i})w&=(rb_{i+5},b_{i+5}) \ [b_{i+5},B,rb_{i}] \cdot w; \\
[a_{i+4},L,b_i] \cdot (b_{i},rb_{i})w&=(a_{i+4},rb_{i})w; \\
[rb_{i+3},R,b_i] \cdot (b_{i},rb_{i})(rb_i,ra_{i+3})w&=(rb_{i+3},ra_{i+6}) \ [ra_{i+6},R,rb_{i}] \cdot (rb_i,ra_{i+3})w; \\
[rb_{i+3},R,b_i] \cdot (b_{i},rb_{i})(rb_i,b_{i})w&=(rb_{i+3},ra_{i+6}) \ [ra_{i+6},R,rb_{i}] \cdot (rb_i,b_{i})w; \\
[rb_{i+3},R,b_i] \cdot (b_{i},rb_{i})(rb_i,rb_{i+4})w&=(rb_{i+3},b_{i+3}) \ [b_{i+3},R,rb_{i}] \cdot (rb_i,rb_{i+4})w.  \qed
\end{align*}
\end{example}
	\endgroup


\begin{thebibliography}{99}
 
\bibitem{AP} J. Anderson and I.F. Putnam, {\em Topological invariants for substitution tilings and their $C^*$-algebras}, Ergodic Th. and Dynam. Sys. {\bf 18} (1998), 509--537.

\bibitem{BG} M. Baake and U. Grimm, \emph{Aperiodic Order. Volume 1: A Mathematical Invitation}, Cambridge University Press, Cambridge, 2013.

\bibitem{BDHS} M. Barge, B. Diamond, J. Hunton, and L. Sadun, {\em Cohomology of substitution tiling spaces}, Ergodic Th. and Dynam. Sys. {\bf 30} (2010), 1607--1627.

\bibitem{BGN} L. Bartholdi, R. Grigorchuk and V. Nekrashevych, \emph{From fractal groups to fractal sets}, Erwin Schr\"odinger International Institute for Mathematical Physics, 2002. 

\bibitem{Exel} R. Exel, \emph{Partial actions of groups and actions of inverse semigroups}, Proc. AMS {\bf 126} (1998) 3481--3494.

\bibitem{Gr} R. I. Grigorchuk,  {\em On Burnside’s problem on periodic groups}, Func. Anal. Appl. {\bf 14 } (1980), 41--43.

\bibitem{Gr2} R. I. Grigorchuk, {\em On the Milnor problem of group growth}, Soviet Math. Dokl. {\bf 28} (1983), 23--26.

\bibitem{GS} B. Grunbaum and G.C. Shephard, {\em Tilings and Patterns}, W.H. Freeman, New York,  1987.

\bibitem{Howie} J. M. Howie, \emph{An introduction to Semigroup theory}, Academic Press, 1976. 

\bibitem{Kel1} J. Kellendonk, \emph{Noncommutative geometry of tilings and gap labelling}, Rev. Math. Phy. {\bf 7} (1995), 1133--1180.

\bibitem{Kel2} J. Kellendonk, \emph{The local structure of tilings and their integer group of coinvariants}, Comm. Math. Phys. {\bf 187} (1997), 115--157.

\bibitem{KL} J. Kellendonk and M.V. Lawson, \emph{Tiling semigroups}, J. Algebra \textbf{224} (2000), 140--150.

\bibitem{KelPut} J. Kellendonk\ and\ I. F. Putnam, Tilings, $C^*$-algebras, and $K$-theory, in {\it Directions in mathematical quasicrystals}, 177--206, CRM Monogr. Ser., 13, Amer. Math. Soc., Providence, RI.

\bibitem{nek_book} V. Nekrashevych, {Self-Similar Groups}, Math. Surveys and Monographs {vol. 117}, Amer. Math. Soc., Providence, 2005.

\bibitem{Nek_SSIS} V. Nekrashevych, \emph{Self-similar inverse semigroups and Smale spaces}, Intern. J. Alg. Comp. \textbf{16} (2006), 849--874.

\bibitem{pentaplexity} R. Penrose, {\em Pentaplexity: A class of non-periodic tilings of the plane}, Math. Intellig. {\bf 2} (1979), 32--37.

\bibitem{RW} C. Radin and M. Wolff, {\em Space tilings and local isomorphism}, Geom. Dedicata {\bf 42} (1992), 355--360.

\bibitem{Sadun} L. Sadun, {\em Topology of Tiling Spaces}, University Lecture Series {\bf 46}, American Mathematical Society, Providence, 2008.

\bibitem{SW} L. Sadun and R.F. Williams, \emph{Tiling spaces are Cantor set fiber bundles}, Ergodic Th. and Dynam. Sys. {\bf 23} (2003), 307--316.

\bibitem{Sol} B. Solomyak, {\em Dynamics of self-similar tilings}, Ergodic Th. and Dynam. Sys. {\bf 17} (1997), 695--738.

\bibitem{Sol2} B. Solomyak, {\em Nonperiodicity implies unique composition for self-similar translationally finite tilings}, Disc. Comp. Geom. {\bf 20} (1998), 265--279.

\bibitem{Zhu} Y. Zhu, \emph{Some fundamental properties of tiling semigroups}, J. Algebra \textbf{252} (2002), 195--204.

\end{thebibliography}
\end{document}